\begin{document}

\twocolumn[

\aistatstitle{Sample Complexity of Sinkhorn Divergences}

\aistatsauthor{ Aude Genevay \And L\'enaic Chizat \And Francis Bach \\ \And Marco Cuturi \And  Gabriel Peyr\'e}

\aistatsaddress{ DMA,\\ENS Paris \And  INRIA \And INRIA and\\ DI, ENS Paris \And  Google and \\ CREST ENSAE \And CNRS and DMA, \\ ENS Paris } ]
%\maketitle

\begin{abstract}
\vskip-.2cm
Optimal transport (OT) and maximum mean discrepancies (MMD) are now routinely used in machine learning to compare probability measures. %The former induces a rich Riemannian structure on the space of probability measures, the latter a comparatively simpler and flat Hilbertian structure that is also easier to handle numerically. 
We focus in this paper on \emph{Sinkhorn divergences} (SDs), a regularized variant of OT distances which can interpolate, depending on the regularization strength $\varepsilon$, between OT ($\varepsilon=0$) and MMD ($\varepsilon=\infty$). Although the tradeoff induced by that regularization is now well understood computationally (OT, SDs and MMD require respectively $O(n^3\log n)$, $O(n^2)$ and $n^2$ operations given a sample size $n$), much less is known in terms of their \emph{sample complexity}, namely the gap between these quantities, when evaluated using finite samples \emph{vs.} their respective densities. Indeed, while the sample complexity of OT and MMD stand at two extremes, $O(1/n^{1/d})$ for OT in dimension $d$ and $O(1/\sqrt{n})$ for MMD, that for SDs has only been studied empirically. In this paper, %we exhibit theoretical properties that shed light on the widely reported ability of Sinkhorn divergences to overfit less than OT. 
we \emph{(i)} derive a bound on the approximation error made with SDs when approximating OT as a function of the regularizer $\varepsilon$, \emph{(ii)} prove that the optimizers of regularized OT are bounded in a Sobolev (RKHS) ball independent of the two measures and \emph{(iii)} provide the first sample complexity bound for SDs, obtained,by reformulating SDs as a maximization problem in a RKHS. We thus obtain a scaling in $1/\sqrt{n}$ (as in MMD), with a constant that depends however on $\varepsilon$, making the bridge between OT and MMD complete.
\end{abstract}

\section{Introduction}
Optimal Transport (OT) has emerged in recent years as a powerful tool to compare probability distributions. Indeed, Wasserstein distances can endow the space of probability measures with a rich Riemannian structure~\citep{ambrosio2006gradient}, one that is able to capture meaningful geometric features between measures even when their supports do not overlap. OT has been, however, long neglected in data sciences for two main reasons, which could be loosely described as \emph{computational} and \emph{statistical}: computing OT is costly since it requires solving a network flow problem; and suffers from the curse-of-dimensionality, since, as will be made more explicit later in this paper, the Wasserstein distance computed between two samples converges only very slowly to its population counterpart.

Recent years have witnessed significant advances on the computational aspects of OT. A recent wave of works have exploited entropic regularization, both to compare  discrete measures with finite support~\citep{CuturiSinkhorn} or measures that can be sampled from~\citep{2016-genevay-nips}. Among the many learning tasks performed with this regularization, one may cite domain adaptation~\citep{courty2014domain}, text retrieval~\citep{kusner2015word} or multi-label classification \citep{2015-Frogner}. The ability of OT to compare probability distributions with disjoint supports (as opposed to the Kullback-Leibler divergence) has also made it  popular as a loss function to learn generative models \citep{WassersteinGAN,OTGAN,ABC}.

At the other end of the spectrum, the maximum mean discrepancy (MMD)~\citep{GrettonMMD} is an integral probability metric~\citep{empmmd} on a reproducing kernel Hilbert space (RKHS) of test functions. The MMD is easy to compute, and has also been used in a very wide variety of applications, including for instance the estimation of generative models \citep{li2015generative,MMD-GAN,MMDGAN}.

OT and MMD differ, however, on a fundamental aspect: their sample complexity. The definition of sample complexity that we choose here is the convergence rate of a given metric between a measure and its empirical counterpart, as a function of the number of samples. This notion is crucial in machine learning, as bad sample complexity implies overfitting and high gradient variance when using these divergences for parameter estimation. In that context, it is well known that the sample complexity of MMD is independent of the dimension, scaling as $\frac{1}{\sqrt{n}}$ \citep{GrettonMMD} where $n$ is the number of samples.  In contrast, it is well known that standard OT suffers from the curse of dimensionality \citep{dudley1969speed}: Its sample complexity is exponential in the dimension of the ambient space. Although it was recently proved that this result can be refined to consider the implicit dimension of data \citep{weed2017sharp}, the sample complexity of OT appears now to be the major bottleneck for the use of OT in high-dimensional machine learning problems. 

A remedy to this problem may lie, again, in regularization. Divergences defined through regularized OT, known as Sinkhorn divergences, seem to be indeed less prone to over-fitting. Indeed, a certain amount of regularization seems to improve performance in simple learning tasks \citep{CuturiSinkhorn}. Additionally, recent papers \citep{ramdas2017wasserstein,genevay2018learning} have pointed out the fact that Sinkhorn divergences are in fact interpolating between OT (when regularization goes to zero) and MMD (when regularization goes to infinity). However, aside from a recent central limit theorem in the case of measures supported on discrete spaces~\citep{bigot2017central}, the convergence of empirical Sinkhorn divergences, and more generally their sample complexity, remains an open question.

\paragraph{Contributions.}
This paper provides three main contributions, which all exhibit theoretical properties of Sinkhorn divergences. Our first result is a bound on the speed of convergence of regularized OT to standard OT as a function of the regularization parameter, in the case of continuous measures.  The second theorem proves that the optimizers of the regularized optimal transport problem lie in a Sobolev ball which is independent of the measures. This allows us to rewrite the Sinkhorn divergence as an expectation maximization problem in a RKHS ball and thus justify the use of kernel-SGD for regularized OT as advocated in \citep{2016-genevay-nips}. As a consequence of this reformulation, we provide as our third contribution a sample complexity result. We focus on how the sample size and the regularization parameter affect the convergence of the empirical Sinkhorn divergence (i.e., computed from samples of two continuous measures) to the continuous Sinkhorn divergence. We show that the Sinkhorn divergence benefits from the same sample complexity as MMD, scaling in $\frac{1}{\sqrt{n}}$ but with a constant that depends on the inverse of the regularization parameter. Thus sample complexity worsens when getting closer to standard OT, and there is therefore a tradeoff between a good approximation of OT (small regularization parameter) and fast convergence in terms of sample size (larger regularization parameter). We conclude this paper with a few numerical experiments to asses the dependence of the sample complexity on $\epsilon$ and $d$ in very simple cases.

\paragraph{Notations.} We consider $\Xx$ and $\Yy$ two bounded subsets of $\RR^d$ and we denote by $\abs{\Xx}$ and $\abs{\Yy}$ their respective diameter $\sup \{ \norm{x-x'} | x, x' \in \Xx (resp. \Yy) \}$. The space of positive Radon measures of mass 1 on $\Xx$ is  denoted $\Mm_+^1(\Xx)$ and we use upper cases $X,Y$ to denote random variables in these spaces. We use the notation $\phi = O(1+ x^k)$ to say that $\phi \in \RR$ is bounded by a polynomial of order $k$ in $x$ with positive coefficients. 

\section{Reminders on Sinkhorn Divergences}

We consider two probability measures $\alpha \in \Mm_+^1(\Xx)$ and $\beta$ on $\Mm_+^1(\Yy)$. The \citeauthor{Kantorovich42} formulation~\citeyearpar{Kantorovich42} of optimal transport between $\alpha$ and $\beta$ is defined by
\eql{W(\alpha,\beta) \eqdef \min_{\pi \in \Pi(\alpha,\beta)} \int_{\Xx \times \Yy} c(x,y) \d\pi(x,y) \tag{$\Pp$},
}
where the feasible set is composed of probability distributions over the product space $\Xx \times \Yy$ with fixed marginals $\alpha,\beta$:
\eq{ 
	\Pi(\alpha,\beta) \eqdef \enscond{\pi \in \Mm_+^1(\Xx \times \Yy) }{ P_{1\sharp}\pi=\alpha, P_{2\sharp}\pi=\beta  },
	% \forall (A,B) \subset \Xx \times \Yy, \pi(A \times \Yy) = \mu(A), \pi(\Xx \times B) = \beta(B)
} 
where $P_{1\sharp}\pi$ (resp.~$P_{2\sharp}\pi$) is the marginal distribution of $\pi$ for the first (resp.~second) variable, using the projection maps $P_1(x,y)=x; P_2(x,y)=y$ along with the push-forward operator $_\sharp$.

The cost function $c$ represents the  cost to move a unit of mass from $x$ to $y$. Through this paper, we will assume this function to be $\Cc^\infty$ (more specifically, we need it to be $\Cc^{\frac{d}{2}+1}$). When $\Xx = \Yy$ is endowed with a distance~$d_\Xx$, choosing  $c(x,y)=d_\Xx(x,y)^p$ where $p \geq 1$ yields the $p$-Wasserstein distance  between probability measures.

We introduce regularized optimal transport, which consists in adding an entropic regularization to the optimal transport problem, as proposed in~\citep{CuturiSinkhorn}. Here we use the relative entropy of the transport plan with respect to the product measure $\alpha \otimes \beta$ following \citep{2016-genevay-nips}:
\begin{align}\label{OTreg}
W_\epsilon(\alpha,\beta) \eqdef \!\min_{\pi\in\Pi(\alpha,\beta)} \int_{\Xx \times \Yy}\!\!\! &c(x,y) \d\pi(x,y) \nonumber \\&+ \epsilon H(\pi \mid \alpha\otimes \beta) \tag{$\Pp_\epsilon$},
\end{align}
where 
\eql{H(\pi \mid \alpha\otimes \beta) \eqdef  \int_{\Xx \times \Yy} \log\left(\frac{\d\pi(x,y)}{\d\alpha(x)\d\beta(y)}\right) \d\pi(x,y). \label{entropy}}

Choosing the relative entropy as a regularizer allows to express the dual formulation of regularized OT as the maximization of an expectation problem, as shown in \citep{2016-genevay-nips}
\begin{align*} \label{dual OTreg}
W_\epsilon(\alpha,\beta) = & \max_{u \in \Cc(\Xx),v \in \Cc(\Yy)} \int_\Xx u(x) \d\alpha(x) + \int_\Yy v(y) \d\beta(y) \\
& - \epsilon \int_{\Xx \times \Yy} e^{\frac {u(x)+v(y) - c(x,y)}{\epsilon}}\d\alpha(x) \d\beta(y)  + \epsilon \\
= & \max_{u \in \Cc(X),v \in \Cc(Y)} \EE_{\alpha\otimes\beta} \left[f_\epsilon^{XY}(u,v) \right] + \epsilon
\end{align*}
where $f_\epsilon^{xy}(u,v) = u(x) + v(y) - \epsilon  e^{\frac {u(x)+v(y) - c(x,y)}{\epsilon}}.$
This reformulation as the maximum of an expectation will prove crucial to obtain sample complexity results. The existence of optimal dual potentials $(u,v)$ is proved in the appendix. They are unique $\alpha-$ and $\beta-$a.e. up to an additive constant.

To correct for the fact that $W_\epsilon(\alpha,\alpha)\ne 0$,  \citep{genevay2018learning} propose Sinkhorn divergences, a natural normalization of that quantity defined as
\eql{ \bar W_\epsilon(\alpha,\beta) = W_\epsilon(\alpha,\beta) - \frac{1}{2} (W_\epsilon(\alpha,\alpha) + W_\epsilon(\beta,\beta)).
\label{SinkhornDiv}}
This normalization ensures that $\bar W_\epsilon(\alpha,\alpha) = 0$, but also has a noticeable asymptotic behavior as mentioned in~\citep{genevay2018learning}. Indeed, when $\epsilon \rightarrow 0$ one recovers the original (unregularized) OT problem, while choosing $\epsilon \rightarrow +\infty$ yields the maximum mean discrepancy associated to the kernel $k = -c/2$, where MMD is defined by:
\begin{align*}
	MMD_k(\alpha,\beta) = \mathbb{E}_{\alpha\otimes\alpha}[k(X,X')] &+ \mathbb{E}_{\beta\otimes \beta}[k(Y,Y')] \\
	&- 2 \mathbb{E}_{\alpha\otimes \beta}[k(X,Y)].
\end{align*}
In the context of this paper, we study in detail the sample complexity of $W_\epsilon(\alpha,\beta)$, knowing that these results can be extended to $\bar W_\epsilon(\alpha,\beta)$.
\section{Approximating Optimal Transport with Sinkhorn Divergences}

In the present section, we are interested in bounding the error made when approximating $W(\alpha,\beta)$ with $W_\epsilon(\alpha,\beta)$. 

\begin{thm}
Let $\alpha$ and $\beta$ be probability measures on $\Xx$ and $\Yy$ subsets of $\mathbb{R}^d$ such that $\abs{\Xx} = \abs{\Yy} \leq D$ and assume that $c$ is $L$-Lipschitz w.r.t.\ $x$ and $y$. It holds
\begin{eqnarray}
0 \leq W_\epsilon(\alpha,\beta) - W(\alpha,\beta) &\leq  2 \epsilon d \log\left(\frac{e^2\cdot L\cdot D}{\sqrt{d}\cdot \epsilon}\right) \\  &\sim_{\epsilon \to 0 } 2\epsilon d \log(1/\epsilon) .
\end{eqnarray}
\end{thm}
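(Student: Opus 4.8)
The plan is to sandwich $W_\epsilon$ between $W$ and $W$ plus a controlled error term. The lower bound $W(\alpha,\beta) \le W_\epsilon(\alpha,\beta)$ is immediate: the entropy term $H(\pi \mid \alpha\otimes\beta)$ is nonnegative for $\pi \in \Pi(\alpha,\beta)$ (it is a relative entropy between probability measures), so adding $\epsilon H(\pi\mid\alpha\otimes\beta)$ to the objective in $(\Pp)$ can only increase the minimum. For the upper bound, the natural strategy is to exhibit a single feasible plan $\pi$ whose regularized cost is close to $W(\alpha,\beta)$. Starting from an optimal plan $\pi^\star$ for $(\Pp)$, a raw $\pi^\star$ is useless because it may be singular, making $H(\pi^\star\mid\alpha\otimes\beta) = +\infty$. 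So the first real step is to \emph{regularize the optimal plan}: replace $\pi^\star$ by a blurred version $\pi^\star_\sigma$ obtained by convolving with a small kernel (or, more precisely, by pushing $\pi^\star$ forward under a map $(x,y)\mapsto(x,y)+$ small noise, then restricting/renormalizing to keep the marginals equal to $\alpha,\beta$ — this marginal correction is the fiddly part). One then estimates two quantities: first, the extra transport cost $\int c\, \d\pi^\star_\sigma - \int c\, \d\pi^\star$, which is $O(L\sigma\sqrt d)$ by the Lipschitz assumption on $c$ and the fact that the blur moves mass by at most $O(\sigma)$ in each coordinate; second, the entropy $H(\pi^\star_\sigma \mid \alpha\otimes\beta)$, which should scale like $O(d\log(D/\sigma))$ since a density supported on a set of diameter $D$ that is "spread out at scale $\sigma$" in $2d$ dimensions — but here one cleverly only pays for $d$ directions — has entropy controlled by the log of the volume ratio $(D/\sigma)^d$.

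Combining, $W_\epsilon(\alpha,\beta) \le \int c\,\d\pi^\star_\sigma + \epsilon H(\pi^\star_\sigma\mid\alpha\otimes\beta) \le W(\alpha,\beta) + C_1 L\sigma\sqrt d + C_2 \epsilon d \log(D/\sigma)$, and the final step is to optimize over the free parameter $\sigma$. Setting the derivative to zero gives $\sigma \asymp \epsilon d / (L\sqrt d) = \epsilon\sqrt d / L$ up to constants, and substituting back yields a bound of the form $2\epsilon d\log(e^2 L D/(\sqrt d\,\epsilon))$, with the constants inside the log arranged to absorb the additive pieces. The asymptotic statement $\sim 2\epsilon d\log(1/\epsilon)$ as $\epsilon\to0$ is then immediate since the $\log(e^2 LD/\sqrt d)$ part is a constant dominated by $\log(1/\epsilon)$.

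The main obstacle I expect is the construction of $\pi^\star_\sigma$ together with a \emph{clean} entropy estimate that produces exactly the factor $d$ (not $2d$) and the right constants. A crisp way to do this: instead of blurring symmetrically, disintegrate $\pi^\star = \int \delta_{T(x)}\,\d\alpha(x)$ if $\pi^\star$ happens to be induced by a transport map $T$ (Brenier-type), or more generally use the disintegration $\pi^\star(\d x,\d y) = \d\alpha(x)\,\pi^\star_x(\d y)$, and define $\pi^\star_\sigma$ by replacing the fiber $\pi^\star_x$ with a small uniform (or Gaussian) ball of radius $\sigma$ around the barycenter of $\pi^\star_x$. This keeps the first marginal exactly $\alpha$; the second marginal is only approximately $\beta$, and one must argue this perturbation of $\beta$ changes $W_\epsilon$ by a negligible amount, or patch it. Handling the case where no transport map exists, and making the "volume of a $\sigma$-ball in $d$ dimensions is $\asymp (\sigma/\sqrt d)^d$" estimate rigorous with explicit constants (this is where $\sqrt d$ enters the logarithm), is the delicate bookkeeping that the proof must carry out carefully; everything else is Lipschitz estimates and a one-variable optimization.
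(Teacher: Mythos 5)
Your overall skeleton matches the paper's: the lower bound via nonnegativity of the relative entropy, an upper bound obtained by plugging a near-optimal feasible plan with cost excess $O(L\sigma\sqrt d)$ and entropy $O(d\log(D/\sigma))$, and a final one-parameter optimization giving $\sigma\asymp \epsilon\sqrt d/L$. But the step you defer as ``fiddly bookkeeping'' is in fact the entire content of the proof, and the constructions you sketch would not deliver it. First, feasibility: $W_\epsilon(\alpha,\beta)$ is a minimum over $\Pi(\alpha,\beta)$, so your competitor must have \emph{exactly} the marginals $\alpha$ and $\beta$; both the convolution blur and the fiber-replacement by a small ball preserve at most one marginal, and ``the perturbation of $\beta$ is negligible'' is not something you can invoke without a quantitative stability estimate for $W_\epsilon$ in its marginals --- which is nowhere easier than the theorem itself. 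Second, and more fundamentally, the entropy in $(\Pp_\epsilon)$ is relative to $\alpha\otimes\beta$, not to Lebesgue measure. Your estimate ``entropy $\approx\log$ of the volume ratio $(D/\sigma)^d$'' is a differential-entropy computation; but if $\alpha$ or $\beta$ has a singular part (e.g., is discrete, a case the theorem covers), a plan whose fibers are uniform or Gaussian balls is mutually singular with $\alpha\otimes\beta$ and $H(\pi\mid\alpha\otimes\beta)=+\infty$, so the bound collapses.

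The paper resolves both issues at once with a \emph{block approximation}: partition $\RR^d$ into cubes $Q_i^\Delta$ of side $\Delta$ and define $\pi^\Delta$ on each product block $Q_i^\Delta\times Q_j^\Delta$ as $\frac{\pi_0(Q_i^\Delta\times Q_j^\Delta)}{\alpha(Q_i^\Delta)\,\beta(Q_j^\Delta)}\;\alpha\vert_{Q_i^\Delta}\otimes\beta\vert_{Q_j^\Delta}$, where $\pi_0$ is an unregularized optimizer. This plan lies in $\Pi(\alpha,\beta)$ exactly (summing over $j$ restores $\alpha$ on each $Q_i^\Delta$, and symmetrically for $\beta$), and it is absolutely continuous with respect to $\alpha\otimes\beta$ with a piecewise-constant density, whatever the structure of $\alpha,\beta$. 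The cost excess is at most $2L\Delta\sqrt d$ by Lipschitz continuity of $c$ over each block, and the entropy is bounded by $-\sum_i\boldsymbol{\alpha}_i^\Delta\log\boldsymbol{\alpha}_i^\Delta-\sum_j\boldsymbol{\beta}_j^\Delta\log\boldsymbol{\beta}_j^\Delta\le 2d\log(2D/\Delta)$, by comparing the block-histogram entropy with that of the uniform distribution on a cube of side $2D$. Optimizing $2L\Delta\sqrt d+2\epsilon d\log(2D/\Delta)$ at $\Delta=2\sqrt d\,\epsilon/L$ then gives exactly $4\epsilon d+2\epsilon d\log\bigl(LD/(\sqrt d\,\epsilon)\bigr)$, i.e.\ the stated constant $e^2$ inside the logarithm. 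So the gap in your proposal is concrete: you need a coupling-level discretization of this type (re-coupling $\alpha$ and $\beta$ block by block with weights taken from $\pi_0$), not a smoothing of $\pi^\star$, to get a feasible plan whose relative entropy with respect to $\alpha\otimes\beta$ is finite and quantitatively controlled for arbitrary marginals.
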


\begin{proof}
For a probability measure $\pi$ on $\Xx\times \Yy$, we denote by $C(\pi)=\int c \, \d \pi$ the associated transport cost and by $H(\pi)$ its relative entropy  with respect to the product measure $\alpha \otimes \beta$ as defined in \eqref{entropy}.
%For a regularization parameter $\epsilon>0$, we denote by $\pi_\epsilon$ the minimizer of 
%\[\min_{\pi \in \Pi(\alpha,\beta)} C(\pi) + \epsilon H(\pi)\]
%which is unique by strict convexity of the functional. 
Choosing $\pi_0$ a minimizer of $\min_{\pi \in \Pi(\alpha,\beta)} C(\pi)$, we will build our upper bounds using a family of transport plans with finite entropy that approximate $\pi_0$. The simplest approach consists in considering block approximation. In contrast to the work of~\citet{Carlier2017}, who also considered this technique, our focus here is on quantitative bounds.

\begin{defn}[Block approximation]
For a resolution $\Delta>0$, we consider the block partition of $\RR^d$ in hypercubes of side $\Delta$ defined as 
\begin{eqnarray*}
 \{ Q^\Delta_k = {[k_1\cdot \Delta,(k_1+1)\cdot \Delta[} \times \dots {[k_d\cdot \Delta,(k_d +1)\cdot \Delta[}\; ;\\
  k = (k_1,\dots,k_d) \in \mathbb{Z}^d\}.
\end{eqnarray*}

To simplify notations, we introduce $Q^\Delta_{ij}\eqdef Q^\Delta_i \times Q^\Delta_j$, $\boldsymbol{\alpha}_i^\Delta \eqdef \alpha(Q_i^\Delta)$, $\boldsymbol{\beta}_j^\Delta \eqdef \beta(Q_j^\Delta)$.
The block approximation of $\pi_0$ of resolution $\Delta$ is the measure $\pi^\Delta \in \Pi(\alpha,\beta)$ characterized by
\[
\pi^\Delta\vert_{Q^\Delta_{ij}} =  \frac{\pi_0(Q^\Delta_{ij})}{\boldsymbol{\alpha}_i^\Delta \cdot \boldsymbol{\beta}_j^\Delta } (\alpha\vert_{Q^\Delta_i}\otimes \beta\vert_{Q^\Delta_j})
\]
for all $(i,j) \in (\mathbb{Z}^d)^2$, with the convention $0/0=0$.
\end{defn}

$\pi^\Delta$ is nonnegative by construction. Observe also that for any Borel set $B\subset \RR^d$, one has
\begin{align*}
\pi^\Delta(B\times \RR^d) &= \sum_{(i,j)\in (\mathbb{Z}^d)^2} \frac{\pi_0(Q_{ij}^\Delta)}{\boldsymbol{\alpha}_i^\Delta \cdot \boldsymbol{\beta}_j^\Delta } \cdot \alpha(B\cap Q^\Delta_i) \cdot \boldsymbol{\beta}_j^\Delta \\
&= \sum_{i \in \mathbb Z^d} \alpha(B\cap Q^\Delta_i) =  \alpha(B),
\end{align*}
which proves, using the symmetric result in $\beta$, that $\pi^\Delta$ belongs to $\Pi(\alpha,\beta)$. 
As a consequence, for any $\epsilon>0$ one has $W_\epsilon(\alpha,\beta)  \leq C(\pi^\Delta) + \epsilon H(\pi^\Delta)$.
Recalling also that the relative entropy $H$ is nonnegative over the set of probability measures, we have the bound
\[
0\leq W_\epsilon(\alpha,\beta) - W(\alpha,\beta) \leq (C(\pi^\Delta)-C(\pi_0)) + \epsilon H(\pi^\Delta).
\]
We can now bound the terms in the right-hand side, and choose a value for $\Delta$ that minimizes these bounds.

The bound on $C(\pi^\Delta)-C(\pi_0)$ relies on the Lipschitz regularity of the cost function. Using the fact that $\pi^\Delta(Q^\Delta_{ij})=\pi_0(Q^\Delta_{ij})$ for all $i,j$, it holds
\begin{align*}
C(\pi^\Delta)-C(\pi_0) %&\leq \sum_{(i,j)\in (\mathbb{Z}^d)^2}  \left[ \sup_{x,y \in Q^\Delta_{ij}} c(x,y) \pi^\Delta(Q^\Delta_{ij}) - \inf_{x,y \in Q^\Delta_{ij}} c(x,y) \pi_0(Q^\Delta_{ij}) \right] \\
& = \sum_{(i,j)\in (\mathbb{Z}^d)^2}  \pi_0(Q^\Delta_{ij}) \Big( \sup_{x,y \in Q^\Delta_{ij}} c(x,y) \\  & \phantom{\sum_{(i,j)\in (\mathbb{Z}^d)^2}  \pi_0(Q^\Delta_{ij}) \Big(} - \inf_{x,y \in Q^\Delta_{ij}} c(x,y) \Big)\\
&\leq 2L\Delta \sqrt{d},
\end{align*}
where $L$ is the Lipschitz constant of the cost (separately in $x$ and $y$) and $\Delta\sqrt{d}$ is the diameter of each set $Q^\Delta_{i}$.

As for the bound on $H(\pi^\Delta)$, using the fact that $\pi_0(Q^\Delta_{ij})\leq 1$ we get
\begin{align*}
H(\pi^\Delta) &= \! \! \sum_{(i,j)\in (\mathbb{Z}^d)^2}  \! \log\left(\frac{ \pi_0(Q^\Delta_{ij})}{\boldsymbol{\alpha}_i^\Delta\cdot \boldsymbol{\beta}_j^\Delta} \right) \pi_0(Q^\Delta_{ij}) \\
& \leq \sum_{(i,j)\in (\mathbb{Z}^d)^2} \left( \log(1/\boldsymbol{\alpha}_i^\Delta) + \log(1/ \boldsymbol{\beta}_j^\Delta) \right)  \pi_0(Q^\Delta_{ij})\\
& = - H^\Delta(\alpha) - H^\Delta(\beta),
%& = \sum_{i \in \mathbb{Z}^d} \boldsymbol{\alpha}_i^\Delta\log(1/\boldsymbol{\alpha}_i^\Delta)  +  \sum_{j \in \mathbb{Z}^d} \boldsymbol{\beta}_j^\Delta\log(1/\boldsymbol{\beta}_j^\Delta) \\
%& \leq 2d\cdot \log(1/\Delta) +H_{\mathcal{L}^d}(\alpha^{\Delta}) +H_{\mathcal{L}^d}(\beta^{\Delta})
\end{align*}
where we have defined $H^\Delta(\alpha) =\sum_{i \in \mathbb{Z}^d} \boldsymbol{\alpha}_i^\Delta\log(\boldsymbol{\alpha}_i^\Delta)$ and similarly for $\beta$. Note that in case $\alpha$ is a discrete measure with finite support, $H^\Delta(\alpha)$ is equal to (minus) the discrete entropy of $\alpha$ as long as $\Delta$ is smaller than the minimum separation between atoms of $\alpha$. However, if $\alpha$ is not discrete then $H^\Delta(\alpha)$ blows up to $-\infty$ as $\Delta$ goes to $0$ and we need to control how fast it does so.
Considering $\alpha^\Delta$ the block approximation of $\alpha$ with constant density $\boldsymbol{\alpha}_i^\Delta/\Delta^d$ on each block $Q^\Delta_i$ and (minus) its differential entropy $H_{\mathcal{L}^d}(\alpha^\Delta) = \int_{\mathbb{R}^d} \alpha^\Delta(x) \log \alpha^\Delta(x) dx$, it holds 
$ H^\Delta(\alpha) = H_{\mathcal{L}^d}(\alpha^{\Delta}) - d\cdot \log(1/\Delta)$. Moreover, using the convexity of $H_{\mathcal{L}^d}$, this can be compared with the differential entropy of the uniform probability on a hypercube containing $\Xx$ of size $2D$. Thus it holds $H_{\mathcal{L}^d}(\alpha^{\Delta}) \geq - d \log (2D)$ and thus $H^\Delta(\alpha) \geq - d \cdot \log(2D/\Delta)$.

Summing up, we have for all $\Delta>0$
\[
W_\epsilon(\alpha,\beta) - W(\alpha,\beta) \leq  2L \Delta \sqrt{d} + 2\epsilon d\cdot \log(2D/\Delta) .
\]
The above bound is convex in $\Delta$, minimized with $\Delta = 2\sqrt{d}\cdot \epsilon/L$. This yields
\[
W_\epsilon(\alpha,\beta) - W(\alpha,\beta) \leq 4\epsilon d + 2 \epsilon d \log\left(\frac{L\cdot D}{\sqrt{d}\cdot \epsilon}\right).\qedhere
\]

\end{proof}
\section{Properties of Sinkhorn Potentials}

We prove in this section that Sinkhorn potentials are bounded in the Sobolev space $\mathbf H^s(\RR^d)$ regardless of the marginals $\alpha$ and $\beta$. For $s>\frac{d}{2}$, $\mathbf H^s(\RR^d)$ is a reproducing kernel Hilbert space (RKHS): This property will be crucial to establish sample complexity results later on, using standard tools from RKHS theory.

\begin{defn} The Sobolev space $\mathbf H^s(\Xx)$, for $s \in \NN^*$, is the space of functions $\phi : \Xx \subseteq \RR^d \rightarrow \RR$ such that for every multi-index $k$ with $\abs{k} \leq s$ the mixed partial derivative $\phi^{(k)}$ exists and belongs to $L^2(\Xx)$. It is endowed with the following inner-product
\eql{ \langle \phi, \psi \rangle _{\mathbf H^s(\Xx)} = \sum_{\abs{k} \leq s} \int_{\Xx} \phi^{(k)}(x) \psi^{(k)}(x) \d x .
}
\end{defn}

\begin{thm}\label{thm_rkhs}
When $\Xx$ and $\Yy$ are two compact sets of $\RR^d$ and the cost $c$ is $\Cc^\infty$, then the Sinkhorn potentials $(u,v)$ are uniformly bounded in the Sobolev space $\mathbf H^s(\RR^d)$ and their norms satisfy
\eq{
\norm{u}_{\mathbf H^s} = O\left(1+ \frac{1}{\epsilon^{s-1}}\right) \; \text{and} \;  \norm{v}_{\mathbf H^s}= O\left(1+\frac{1}{\epsilon^{s-1}}\right),} 
with constants that only depend on $\abs{\Xx}$ (or $\abs{\Yy}$ for $v$),$d$, and $\norminf{c^{(k)}}$ for $k = 0 ,\dots,  s$.
In particular, we get the following asymptotic behavior in $\epsilon$: $\norm{u}_{\mathbf H^s} = O(1)$  as $\epsilon \rightarrow +\infty$  and $\norm{u}_{\mathbf H^s} = O(\frac{1}{\epsilon^{s-1}})$ as $\epsilon \rightarrow 0$.

%\qandq \norm{v}_{\mathbf H^s} = O\left(\max(1,\frac{1}{\epsilon^{s-1}})\right) .}
\end{thm}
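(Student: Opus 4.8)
The plan is to start from the first-order optimality conditions of the dual problem, which express each Sinkhorn potential as a smooth ``soft-min'' of the cost, and then to differentiate these formulas while carefully tracking how many factors of $1/\epsilon$ each differentiation produces. Writing the optimality conditions for the dual functional $\EE_{\alpha\otimes\beta}[f_\epsilon^{XY}(u,v)]+\epsilon$, the optimal potentials satisfy the Sinkhorn fixed-point equations $u(x) = -\epsilon\log\int_\Yy e^{(v(y)-c(x,y))/\epsilon}\,\d\beta(y)$ and, symmetrically, $v(y) = -\epsilon\log\int_\Xx e^{(u(x)-c(x,y))/\epsilon}\,\d\alpha(x)$, for $\alpha$-a.e.\ $x$ and $\beta$-a.e.\ $y$. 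These identities define canonical smooth extensions of $u$ (resp.\ $v$) to all of $\Xx$ (resp.\ $\Yy$): note that in the formula for $u$ the potential $v$ appears only \emph{inside} the $y$-integral, so smoothness of $u$ in $x$ follows solely from $c\in\Cc^\infty$, with no circularity. First, since a soft-min lies between the minimum and the maximum and $\beta$ is a probability measure, one gets $\min_y(c(x,y)-v(y))\le u(x)\le\max_y(c(x,y)-v(y))$, hence $\mathrm{osc}(u)\le\mathrm{osc}(c)$ and likewise $\mathrm{osc}(v)\le\mathrm{osc}(c)$; using the invariance $(u,v)\mapsto(u+\lambda,v-\lambda)$ to normalize, we get $\norminf{u},\norminf{v}\le C(\norminf{c})$, so the $\abs{k}=0$ contribution to the Sobolev norm is $O(1)$.

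Differentiating the fixed-point equation under the integral sign (justified by compactness of $\Xx,\Yy$ and smoothness of $c$) yields $\partial_{x_i}u(x) = \langle\partial_{x_i}c\rangle_x := \int_\Yy \partial_{x_i}c(x,y)\,\d\pi_x(y)$, where $\pi_x$ is the probability measure on $\Yy$ with density proportional to $e^{(v(y)-c(x,y))/\epsilon}$ with respect to $\beta$ (i.e.\ the conditional of the optimal plan given $x$); in particular $\norminf{\partial_{x_i}u}\le\norminf{\partial_{x_i}c}=O(1)$. The engine for higher derivatives is the identity $\partial_{x_j}\langle g\rangle_x = \langle\partial_{x_j}g\rangle_x + \frac1\epsilon\bigl(\langle g\rangle_x\langle\partial_{x_j}c\rangle_x - \langle g\,\partial_{x_j}c\rangle_x\bigr)$, which follows from $\partial_{x_j}\log\pi_x(y)=\frac1\epsilon(\partial_{x_j}u(x)-\partial_{x_j}c(x,y))$ together with $\partial_{x_j}u=\langle\partial_{x_j}c\rangle_x$. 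Iterating it, one proves by induction on $\abs{k}$ that for $1\le\abs{k}\le s$ the derivative $\partial^{(k)}u$ is a sum of at most $C(s,d)$ terms, each of the form $\epsilon^{-p}\prod_{l}\langle g_l\rangle_x$ with $0\le p\le\abs{k}-1$ and each $g_l$ a product of partial derivatives of $c$ of total order $\le s$. The inductive step is read off the identity above: each extra differentiation either leaves $p$ unchanged (when it falls on some $g_l$) or raises $p$ by exactly one (the two covariance-type terms), and it keeps the $g_l$'s products of $c$-derivatives of order $\le s$. Since $\abs{\langle g_l\rangle_x}\le\norminf{g_l}\le\bigl(\max_{k\le s}\max(1,\norminf{c^{(k)}})\bigr)^{s}=:K(c)$, this gives $\norminf{\partial^{(k)}u}\le C(s,d)\,K(c)\,(1+\epsilon^{-(s-1)})$ for all $\abs{k}\le s$, and the same for $v$ with $\abs{\Yy}$, $\alpha$ in the roles of $\abs{\Xx}$, $\beta$.

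To finish, bound the Sobolev norm by the sup norms: since $\Xx$ is bounded it sits in a cube of Lebesgue volume at most $\abs{\Xx}^d$, so $\norm{u}_{\mathbf H^s(\Xx)}^2=\sum_{\abs{k}\le s}\norm{\partial^{(k)}u}_{L^2(\Xx)}^2\le\abs{\Xx}^d\sum_{\abs{k}\le s}\norminf{\partial^{(k)}u}^2=O\bigl((1+\epsilon^{-(s-1)})^2\bigr)$, hence $\norm{u}_{\mathbf H^s(\Xx)}=O(1+\epsilon^{-(s-1)})$ with constants depending only on $\abs{\Xx}$, $d$ and $\norminf{c^{(k)}}$ for $k=0,\dots,s$; composing with a bounded Sobolev extension operator $\mathbf H^s(\Xx)\to\mathbf H^s(\RR^d)$ gives the stated bound on $\RR^d$. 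The asymptotics are immediate: $\epsilon^{-(s-1)}\to0$ as $\epsilon\to+\infty$, so $\norm{u}_{\mathbf H^s}=O(1)$, while $\epsilon^{-(s-1)}$ dominates the constant term as $\epsilon\to0$, giving $\norm{u}_{\mathbf H^s}=O(\epsilon^{-(s-1)})$.

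The main obstacle is the combinatorial bookkeeping in the induction of the previous step: one must find the right inductive invariant that \emph{simultaneously} controls (a) the highest power of $1/\epsilon$ that can appear, (b) the number of terms generated, and (c) the maximal order of derivatives of $c$ that arise, and check that the single differentiation identity preserves all three. Everything else is either a classical soft-min/oscillation estimate (the $L^\infty$ control of the potentials) or routine (differentiation under the integral and passing from sup norms to the Sobolev norm on a bounded domain).
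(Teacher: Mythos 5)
Your proposal is correct and follows essentially the same route as the paper: differentiate the dual optimality condition $u(x)=-\epsilon\log\int e^{(v(y)-c(x,y))/\epsilon}\d\beta(y)$ under the integral, bound $\norminf{\partial^{(k)}u}$ by an induction that tracks powers of $1/\epsilon$ (your ``covariance'' identity for $\partial_{x_j}\langle g\rangle_x$ is just a repackaging of the paper's recurrence $g_{n+1}=g_n'+\frac{u'-c'}{\epsilon}g_n$, with $\partial_{x_j}u$ substituted by $\langle\partial_{x_j}c\rangle_x$ instead of kept inside the $g_n$'s), then pass from sup norms to $\mathbf H^s(\Xx)$ on the bounded domain and invoke a Sobolev extension operator to get the bound on $\mathbf H^s(\RR^d)$. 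The only difference is organizational: your inductive invariant (number of terms, power $p\le\abs{k}-1$ of $1/\epsilon$, order of $c$-derivatives) replaces the paper's double recurrence on the $\norminf{g_{n-k}^{(j)}}$ via Leibniz's formula, and your explicit oscillation-plus-normalization argument for the order-zero term is a slightly more careful version of the paper's first item of its Proposition on the potentials.
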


To prove this theorem, we first need to state some regularity properties of the Sinkhorn potentials.
\begin{prop} \label{prop_potentials}
If $\Xx$ and $\Yy$ are two compact sets of $\RR^d$ and the cost $c$ is $\Cc^\infty$, then
\begin{itemize}
\item $u(x)\in [\min_y v(y) - c(x,y), \max_y v(y) - c(x,y)]$ for all $x\in \Xx$
\item $u$ is L-Lipschitz, where L is the Lipschitz constant of $c$
\item $u \in \Cc^\infty(\Xx)$ and $\norminf{u^{(k)}} =  O (1 + \frac{1}{\epsilon^{k-1}})$
\end{itemize}
and the same results also stand for $v$ (inverting $u$ and $v$ in the first item, and replacing $\Xx$ by $\Yy$).
\end{prop}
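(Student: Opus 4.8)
The plan is to read off all three items from the first-order optimality conditions of the dual problem. Writing the stationarity of $u\mapsto\EE_{\alpha\otimes\beta}[f_\epsilon^{XY}(u,v)]$ (with $v$ fixed at its optimum, and symmetrically in $v$) gives the Sinkhorn fixed-point equation
\[
u(x) \;=\; -\epsilon\log\int_\Yy e^{\frac{v(y)-c(x,y)}{\epsilon}}\,\d\beta(y),
\]
valid $\alpha$-a.e., together with the symmetric identity $v(y) = -\epsilon\log\int_\Xx e^{(u(x)-c(x,y))/\epsilon}\,\d\alpha(x)$ valid $\beta$-a.e. Existence of the optimizers being given (appendix) and they being defined up to an additive constant, I would work throughout with the \emph{canonical representatives} furnished by these right-hand sides, defined on all of $\Xx$ (resp.\ $\Yy$): $u$ is then the scale-$\epsilon$ log-sum-exp (``soft-min'') of $y\mapsto c(x,y)-v(y)$ against the probability measure $\beta$, and all three claims are about this representative.

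\emph{Items 1 and 2.} For the first item: since $\beta$ is a probability measure and $t\mapsto e^{t/\epsilon}$ is increasing, $\int e^{(v(y)-c(x,y))/\epsilon}\,\d\beta(y)$ lies between $e^{\min_y(v(y)-c(x,y))/\epsilon}$ and $e^{\max_y(v(y)-c(x,y))/\epsilon}$; applying the decreasing map $-\epsilon\log$ gives the interval containment (the statement for $v$ is identical). For the Lipschitz bound, fix $x,x'$ and write $u(x)-u(x') = -\epsilon\log\!\big(Z(x)/Z(x')\big)$ with $Z(x)\eqdef\int_\Yy e^{(v(y)-c(x,y))/\epsilon}\,\d\beta(y)$; the pointwise inequality $c(x,y)\ge c(x',y)-L\norm{x-x'}$ yields $Z(x)\le e^{L\norm{x-x'}/\epsilon}Z(x')$, hence $u(x)-u(x')\ge -L\norm{x-x'}$, and the symmetric estimate gives $\abs{u(x)-u(x')}\le L\norm{x-x'}$. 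Equivalently, the soft-min operation is $1$-Lipschitz for the sup-norm on its argument, which also gives the oscillation bound $\max_\Xx u - \min_\Xx u \le 2\norminf{c}$ used below.

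\emph{Item 3 (the crux).} Since $c\in\Cc^\infty$, $\Yy$ is compact, and $v$ is bounded (item 1), every $x$-derivative of the integrand $e^{(v(y)-c(x,y))/\epsilon}$ is bounded on $\Xx\times\Yy$ uniformly in $x$, so one may differentiate under the integral sign; as $Z$ is bounded away from $0$, $u=-\epsilon\log Z\in\Cc^\infty(\Xx)$. For the size of the derivatives, introduce the Gibbs probability weights $p_x(y)\propto e^{(v(y)-c(x,y))/\epsilon}$ (a density against $\beta$). A direct computation gives $\partial_i u(x)=\EE_{p_x}[\partial_i c(x,\cdot)]$ and, for any $h(x,\cdot)$, $\partial_j\EE_{p_x}[h] = \EE_{p_x}[\partial_j h] - \tfrac1\epsilon\operatorname{Cov}_{p_x}(h,\partial_j c)$. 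Thus differentiating an expression that is an $\EE_{p_x}$-moment of a product of derivatives of $c$ produces either (a) a term with one extra derivative on $c$ and no new factor $1/\epsilon$, or (b) a covariance term with one extra factor $\partial c$ and exactly one extra factor $1/\epsilon$. Organizing this (an induction; equivalently, cumulants of $(\partial^{(\cdot)}c)/\epsilon$ under $p_x$; equivalently Faà di Bruno on $\log Z$) shows that for a multi-index $k$ with $\abs{k}=m\ge 1$, $u^{(k)}$ is a finite linear combination, with coefficients $\epsilon^{1-\ell}$ for $1\le\ell\le m$, of joint central moments under $p_x$ of derivatives of $c$ of total order at most $m$. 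Each such term is bounded in sup-norm by $C_{m,d}\,\epsilon^{1-\ell}\big(\max_{j\leq m}\norminf{c^{(j)}}\big)^{m}$, and since the exponent $1-\ell$ ranges over $\{0,-1,\dots,1-m\}$ we obtain $\norminf{u^{(k)}}=O\!\big(1+\epsilon^{-(m-1)}\big)$, with dominant term $\epsilon^{-(m-1)}$ as $\epsilon\to0$ and $O(1)$ (in fact $O(1/\epsilon)$, from $\ell=1$) as $\epsilon\to+\infty$. The case $m=0$ follows from items 1--2 and the oscillation bound, after centering the additive constant, giving $\norminf u=O(1)$. Note that $v$ appears only through the bounded weights $p_x$: no regularity of $v$ is used, all smoothness of $u$ being inherited from $c$; the argument for $v$ is symmetric.

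\emph{Expected obstacle.} The only delicate point is the bookkeeping in item 3 — checking that after $m$ differentiations the power of $1/\epsilon$ never exceeds $m-1$. This hinges on the base identity $\partial_i u=\EE_{p_x}[\partial_i c]$ already carrying $\epsilon^{0}$ rather than $\epsilon^{-1}$, together with the fact that rule (b) raises the power by exactly one while rule (a) does not; the cumulant formulation makes this transparent. Everything else is elementary.
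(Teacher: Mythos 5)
Your proof is correct, and it rests on the same foundation as the paper's: the dual optimality condition $e^{-u(x)/\epsilon}=\int e^{(v(y)-c(x,y))/\epsilon}\d\beta(y)$, with item 1 read off as a convex-combination/monotonicity statement and item 3 obtained by repeatedly differentiating under the integral with a count of the powers of $1/\epsilon$. The differences are in the packaging, and they are to your advantage. For item 2 you use the elementary fact that the soft-min is $1$-Lipschitz with respect to sup-norm perturbations of its argument, whereas the paper deduces Lipschitzness from the derivative identity $u'=\int c'\,\gamma_\epsilon\,\d\beta$ together with $\int\gamma_\epsilon\,\d\beta=1$; your route needs no differentiability and also gives the oscillation bound directly. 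For item 3, your Gibbs-weight formulation is exactly the paper's recurrence in disguise: since $\gamma_\epsilon(x,\cdot)\beta$ is your $p_x$ and $u'=\EE_{p_x}[\partial c]$, the paper's update $g_{n+1}=g_n'+\frac{u'-c'}{\epsilon}g_n$ integrated against $p_x$ is precisely your rule $\partial_j\EE_{p_x}[h]=\EE_{p_x}[\partial_j h]-\frac1\epsilon\operatorname{Cov}_{p_x}(h,\partial_j c)$. What your centered (cumulant-style) bookkeeping buys is a transparent induction — each differentiation raises the worst power of $1/\epsilon$ by at most one, starting from power $0$ at first order — replacing the paper's double (forward on $n$, backward on $k$) recurrence with Leibniz expansions in its appendix; what the paper's version buys is an explicit closed recursion for the integrands $g_n$, which it reuses verbatim for the multi-index case. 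Two cosmetic points only: your first-order term $\EE_{p_x}[\partial_i c]$ is a plain rather than central moment, and the uniform bound should read $\max(1,\max_{j\le m}\norminf{c^{(j)}})^{m}$ to cover small norms — neither affects the argument, and your treatment of the $m=0$ case by fixing the additive constant matches the normalization $u^*(0)=0$ used later in the paper.
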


\begin{proof}
The proofs of all three claims exploit the optimality condition of the dual problem:
\eql{\label{optim_condition}
\!\!\!\exp\left(\frac{-u(x)}{\epsilon}\right) = \int \exp\left(\frac{v(y)-c(x,y)}{\epsilon}\right)\beta(y) \d y.
}
Since $\beta$ is a probability measure, $e^{\frac{-u(x)}{\epsilon}}$ is a convex combination of $\phi : x \mapsto e^{\frac{v(x)-c(x,y)}{\epsilon}}$ and thus $e^{\frac{-u(x)}{\epsilon}} \in [ \min_y \phi (y) , \max_y \phi (y)].$ We get the desired bounds by taking the logarithm. The two other points use the following lemmas:

\begin{lem}\label{lem_pot_1}
The derivatives of the potentials are given by the following recurrence
\begin{equation}
u^{(n)}(x) =  \int g_n(x,y) \gamma_\epsilon(x,y) \beta(y) \d y,
\end{equation}
where $$g_{n+1} (x,y) = g_n' (x,y) + \frac{u'(x)-c'(x,y)}{\epsilon} g_n (x,y) ,$$ $g_1(x,y) = c'(x,y)$
and $\gamma_\epsilon(x,y) = \exp(\frac{u(x)+v(y)-c(x,y)}{\epsilon})$.
\end{lem}

\begin{lem}\label{lem_pot_2} The sequence of auxiliary functions $(g_k)_{k=0\dots}$ verifies
 $\norminf{u^{(k)}} \leq \norminf{g_k}$. Besides,  for all $j=0,\dots,k$, for all $k=0,\dots,n-2$, $\norminf{g_{n-k}^{(j)}}$ is bounded by a polynomial in $\frac{1}{\epsilon}$ of order $n-k+j-1$.
\end{lem}

The detailed proofs of the lemmas can be found in the appendix. We give here a sketch in the case where $d=1$. Lemma~\ref{lem_pot_1} is obtained by a simple recurrence, consisting in differentiating both sides of the dual optimality condition. Differentiating under the integral is justified with the usual domination theorem, bounding the integrand thanks to the Lipschitz assumption on $c$, and this bound is integrable thanks to the marginal constraint. Differentiating once and rearranging terms gives:
\begin{equation} \label{uprim}
u'(x) = \int c'(x,y) \gamma_\epsilon(x,y) \beta(y) dy.
\end{equation}
where $\gamma_\epsilon$ is defined in Lemma~\ref{lem_pot_1}. One can easily see that $\gamma_\epsilon'(x,y) = \frac{u'(x)-c'(x,y)}{\epsilon} \gamma_\epsilon(x,y)$ and this allows to conclude the recurrence, by differentiating both sides of the equality. From the primal constaint, we have that $ \int_\Yy  \gamma_\epsilon(x,y) \beta(y) \d y = 1$. Thus thanks to Lemma~\ref{lem_pot_1} we immediately get that $\norminf{u^{(n)}} \leq \norminf{g_n}$. For $n=1$, since $g_1 = c'$ we get that $\norminf{u'} = \norminf{c'} = L$ and this proves the second point of Proposition~\ref{prop_potentials}. For higher values of $n$, we need the result from Lemma~\ref{lem_pot_2}. This property is also proved by recurrence, but requires a bit more work. To prove the induction step, we need to go from bounds on $g_{n-k}^{(i)}$, for $k=0,\dots,n-2$ and $i=0,\dots,k$ to bounds on $g_{n+1-k}^{(i)}$, for $k=0,\dots,n-1$ and $i=0,\dots,k$. 
Hence only new quantities that we need to bound are  $g_{n+1-k}^{(k)}, k=0,\dots,n-1$. This is done by another (backwards) recurrence on $k$ which involves some tedious computations, based on Leibniz formula, that are detailed in the appendix.
\end{proof}

Combining the bounds of the derivatives of the potentials with the definition of the norm in $\mathbf H^s$, is enough to complete the proof of Theorem~\ref{thm_rkhs}.
\begin{proof}(Theorem~\ref{thm_rkhs}) The norm of $u$ in $\mathbf H^s(\Xx)$ is
\eq{\norm{u}_{\mathbf H^s} =  \left( \sum_{\abs{k} \leq s} \int_\Xx (u^{(k)})^2  \right)^\frac{1}{2} \leq \abs{\Xx} \left(\sum_{\abs{k} \leq s} \norminf{u^{(k)}}^2 \right)^\frac{1}{2}.}
From Proposition~\ref{prop_potentials} we have that $\forall k, \norminf{u^{(k)}} = O (1+\frac{1}{\epsilon^{k-1}})$ and thus we get that $\norm{u}_{\mathbf H^s} = O (1+\frac{1}{\epsilon^{s-1}})$.
We just proved the bound in $\mathbf H^s(\Xx)$ but we actually want to have a bound on $\mathbf H^s(\RR^d)$. This is immediate thanks to the Sobolev extension theorem \citep{calderon1961lebesgue} which guarantees that $\norm{u}_{\mathbf H^s(\RR^d)} \leq C \norm{u}_{\mathbf H^s(\Xx)}$ under the assumption that $\Xx$ is a bounded Lipschitz domain. 
\end{proof}

This result, aside from proving useful in the next section to obtain sample complexity results on the Sinkhorn divergence, also proves that kernel-SGD can be used to solve continuous regularized OT. This idea introduced in \cite{2016-genevay-nips} consists in assuming the potentials are in the ball of a certain RKHS, to write them as a linear combination of kernel functions and then perform stochastic gradient descent on these coefficients. Knowing the radius of the ball and the kernel associated with the RKHS (here the Sobolev or Matérn kernel) is crucial to obtain good numerical performance and ensure the convergence of the algorithm.

\section{Approximation from Samples}

In practice, measures $\alpha$ and $\beta$ are only known through a finite number of samples. Thus, what can be actually computed in practice is the Sinkhorn divergence between the empirical measures $\hat \alpha_n \eqdef \frac{1}{n} \sumin \delta_{X_i}$ and $\hat \beta_n \eqdef \frac{1}{n} \sumin \delta_{Y_i}$, where $(X_1,\dots,X_n)$ and $(Y_1,\dots,Y_n)$ are n-samples from $\alpha$ and $\beta$, that is
\begin{multline*}
W_\epsilon(\hat\alpha_n,\hat\beta_n)  =  \max_{u,v} \sumin u(X_i) + \sumin v(Y_i) \\
- \epsilon \sumin \exp\left(\frac {u(X_i)+v(Y_i) - c(X_i,Y_i)}{\epsilon}\right) + \epsilon \\
 =   \max_{u,v} \frac{1}{n}\sumin f_\epsilon^{X_iY_i} (u,v) + \epsilon,
\end{multline*}
where $(X_i,Y_i)_{i=1}^n$ are i.i.d random variables distributed according to $\alpha\otimes\beta$. On actual samples, these quantities can be computed using Sinkhorn's algorithm~\citep{CuturiSinkhorn}.

Our goal is to quantify the error that is made by approximating $\alpha,\beta$ by their empirical counterparts $\hat \alpha_n,\hat \beta_n$, that is bounding the following quantity:
\begin{align}
\vert W_\epsilon(\alpha,\beta) &- W_\epsilon(\hat \alpha_n, \hat \beta_n) \vert =  \nonumber\\
&\vert  \EE f_\epsilon^{XY}(u^*,v^*) - \frac 1 n  \sumin f_\epsilon^{X_iY_i} (\hat u , \hat v) \vert ,  \label{sup_bound}
\end{align}
where $(u^*,v^*)$ are the optimal Sinkhorn potentials associated with $(\alpha,\beta)$ and $(\hat u , \hat v)$ are their empirical counterparts.

\begin{thm} Consider the Sinkhorn divergence between two measures $\alpha$ and $\beta$ on $\Xx$ and $\Yy$ two bounded subsets of $\RR^d$, with a $\Cc^\infty$, $L$-Lipschitz cost $c$. One has 
\eq{\EE\vert W_\epsilon(\alpha,\beta) - W_\epsilon(\hat \alpha_n, \hat \beta_n) \vert  =   O \left(\frac{e^\frac{\kappa}{\epsilon}}{\sqrt{n}}\left(1+\frac{1}{\epsilon^{\lfloor d/2 \rfloor }}\right) \right) }
where $\kappa = 2 L \abs{\Xx}+\norminf{c}$ and constants only depend on $\abs{\Xx}$,$\abs{\Yy}$,$d$, and $\norminf{c^{(k)}}$ for $k = 0 \dots \lfloor d/2 \rfloor$. In particular, we get the following asymptotic behavior in $\epsilon$: 
\begin{align*}
\EE\vert W_\epsilon(\alpha,\beta) - W_\epsilon(\hat \alpha_n, \hat \beta_n) \vert  &=& O\left(\frac{e^\frac{\kappa}{\epsilon}}{\epsilon^{\lfloor d/2 \rfloor }\sqrt{n}}\right) \text{ as }\epsilon \rightarrow 0 \\
\EE\vert W_\epsilon(\alpha,\beta) - W_\epsilon(\hat \alpha_n, \hat \beta_n) \vert  &=& O\left(\frac{1}{\sqrt{n}}\right)\qquad\text{ as }\epsilon \rightarrow + \infty.
\end{align*}
\label{sample_complexity_thm}
\end{thm}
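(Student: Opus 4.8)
The plan is to reduce the quantity in \eqref{sup_bound} to a \emph{uniform} deviation of an empirical process indexed by a ball of the RKHS $\mathbf H^s(\RR^d)$, and to control that deviation through its Rademacher complexity.

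\textbf{Reduction to an RKHS ball, then symmetrization.} Fix $s=\lfloor d/2\rfloor+1$, so that $s>d/2$ and $\mathbf H^s(\RR^d)$ is an RKHS with a bounded (Matérn) kernel $k$, $\sup_x k(x,x)=:K<\infty$. By Theorem~\ref{thm_rkhs} and Proposition~\ref{prop_potentials} (whose bounds do not depend on the underlying measures), both the population optimizers $(u^*,v^*)$ and the empirical optimizers $(\hat u,\hat v)$ lie in
\[
\mathcal{U}\eqdef\enscond{u\in\mathbf H^s}{\norm{u}_{\mathbf H^s}\le\lambda,\ \norminf{u}\le\kappa_{\Xx},\ u\text{ is }L\text{-Lipschitz}},
\]
and likewise in $\mathcal{V}$ over $\Yy$, where $\lambda=O(1+\epsilon^{-(s-1)})=O(1+\epsilon^{-\lfloor d/2\rfloor})$ and $\kappa_{\Xx},\kappa_{\Yy}$ depend only on $L$, $\abs{\Xx}$, $\abs{\Yy}$, $\norminf{c}$, all independent of $\alpha,\beta$. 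Since for $(\mu,\nu)\in\{(\alpha,\beta),(\hat\alpha_n,\hat\beta_n)\}$ the maximum in $W_\epsilon(\mu,\nu)-\epsilon=\max_{u,v}\EE_{\mu\otimes\nu}[f_\epsilon^{XY}(u,v)]$ is attained on $\mathcal{U}\times\mathcal{V}$, the elementary inequality $\abs{\sup F-\sup G}\le\sup\abs{F-G}$ gives
\[
\abs{W_\epsilon(\alpha,\beta)-W_\epsilon(\hat\alpha_n,\hat\beta_n)}\le\sup_{(u,v)\in\mathcal{U}\times\mathcal{V}}\Bigl\lvert\EE_{\alpha\otimes\beta}[f_\epsilon^{XY}(u,v)]-\tfrac1n\sum_{i=1}^n f_\epsilon^{X_iY_i}(u,v)\Bigr\rvert.
\]
A standard symmetrization argument bounds the expectation of the right-hand side by $2\,\EE\,\mathcal{R}_n(\mathcal{F})$, where $\mathcal{F}\eqdef\{(x,y)\mapsto f_\epsilon^{xy}(u,v):(u,v)\in\mathcal{U}\times\mathcal{V}\}$ and $\mathcal{R}_n$ is the empirical Rademacher complexity.

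\textbf{Peeling off the linear and exponential parts.} Write $f_\epsilon^{xy}(u,v)=u(x)+v(y)-\epsilon\,e^{(u(x)+v(y)-c(x,y))/\epsilon}$ and split $\mathcal{R}_n(\mathcal{F})$ by subadditivity. Each linear term $u(x)$, $v(y)$ contributes at most $\lambda\sqrt K/\sqrt n$, the usual bound on the Rademacher complexity of a radius-$\lambda$ RKHS ball with bounded kernel. For the exponential term, the point is that on $\mathcal{U}\times\mathcal{V}$ the argument $u(x)+v(y)-c(x,y)$ stays in a fixed interval of radius $\kappa\eqdef 2L\abs{\Xx}+\norminf{c}$ (this is where the $\norminf{u}$ and Lipschitz controls of Proposition~\ref{prop_potentials} enter, with $\abs{\Xx}=\abs{\Yy}$), on which $t\mapsto\epsilon\,e^{t/\epsilon}$ is $e^{\kappa/\epsilon}$-Lipschitz. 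The Ledoux–Talagrand contraction inequality then strips off the exponential at the cost of a factor $e^{\kappa/\epsilon}$; the fixed cost $c$ carries no Rademacher complexity, so this term contributes at most $e^{\kappa/\epsilon}\cdot 2\lambda\sqrt K/\sqrt n$. Summing, $\EE\,\mathcal{R}_n(\mathcal{F})=O\bigl(\tfrac{1}{\sqrt n}(\lambda+e^{\kappa/\epsilon}\lambda)\bigr)=O\bigl(\tfrac{e^{\kappa/\epsilon}\lambda}{\sqrt n}\bigr)$, and substituting $\lambda=O(1+\epsilon^{-\lfloor d/2\rfloor})$ gives the stated bound; the two asymptotics follow by letting $\epsilon\to0$ (where $e^{\kappa/\epsilon}/\epsilon^{\lfloor d/2\rfloor}$ dominates) and $\epsilon\to+\infty$ (where $e^{\kappa/\epsilon}\to1$, $\lambda\to O(1)$).

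\textbf{Main obstacle.} The delicate step is securing a \emph{uniform} envelope on $u(x)+v(y)-c(x,y)$ over the index set. The crude bound $\norminf{u}\le\sqrt K\,\norm{u}_{\mathbf H^s}\lesssim\epsilon^{-\lfloor d/2\rfloor}$ would yield a catastrophic Lipschitz constant $e^{\Theta(\epsilon^{-\lfloor d/2\rfloor-1})}$ for the exponential; this is exactly why the supremum must be restricted not to the Sobolev ball alone but to the refined class $\mathcal{U}\times\mathcal{V}$ carrying the $O(1)$-in-$\epsilon$ sup-norm and Lipschitz bounds of Proposition~\ref{prop_potentials}, and why the additive-constant normalization of the potentials must be fixed so that these bounds hold simultaneously for $(u^*,v^*)$ and $(\hat u,\hat v)$. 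Getting this envelope — hence the sharp Lipschitz constant $e^{\kappa/\epsilon}$ — is what produces the $\epsilon$-dependence of the final constant; the RKHS Rademacher estimates and the contraction principle are otherwise routine.
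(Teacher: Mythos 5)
Your proposal is correct and follows essentially the same route as the paper: reduce $\vert W_\epsilon(\alpha,\beta)-W_\epsilon(\hat\alpha_n,\hat\beta_n)\vert$ to a uniform empirical deviation over a measure-independent Sobolev/RKHS ball of radius $O(1+\epsilon^{-\lfloor d/2\rfloor})$ (Theorem~\ref{thm_rkhs}), use the envelope $u\oplus v - c = O(L\cdot\mathrm{diam}+\Vert c\Vert_\infty)$ to make the exponential part of $f_\epsilon$ Lipschitz with constant $e^{O(\kappa/\epsilon)}$ (the paper's Lemma~\ref{lem_lipschitz}), and conclude with the standard Rademacher bound for RKHS balls with bounded kernel. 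Your deviations are only in packaging---you invoke $\vert\sup F-\sup G\vert\le\sup\vert F-G\vert$ instead of the paper's three-term decomposition in Lemma~\ref{lem_bound} (gaining the factor $3$), and you apply symmetrization plus Ledoux--Talagrand contraction directly to the peeled-off exponential rather than citing the Bartlett--Mendelson Lipschitz-loss proposition---and, if anything, your explicit restriction of the supremum to the refined class carrying the sup-norm and Lipschitz controls (rather than the bare Sobolev ball) is slightly more careful than the paper's own statement on where the Lipschitz constant is valid.
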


An interesting feature from this theorem is the fact when $\epsilon$ is large enough, the convergence rate does not depend on $\epsilon$ anymore. This means that at some point, increasing $\epsilon$ will not substantially improve convergence. However, for small values of $\epsilon$ the dependence is critical.

We prove this result in the rest of this section. The main idea is to exploit standard results from PAC-learning in RKHS. Our theorem is an application of the following result from \cite{bartlett2002rademacher} ( combining Theorem~12,4) and Lemma~22 in their paper):
\begin{prop}{(Bartlett-Mendelson '02)} \label{prop:Bartlett}
Consider $\alpha$ a  probability distribution, $\ell$ a B-lipschitz loss and $\Gg$ a given class of functions. Then
$$
\EE_\alpha \left[ \sup_{g \in \Gg} \EE_\alpha \ell(g,X) - \frac 1 n \sumin \ell(g,X_i) \right] \leq 2 B \EE_\alpha \Rr(\Gg(X_1^n))
$$
where $\Rr(\Gg(X_1^n))$ is the Rademacher complexity of class $\Gg$ defined by $ \Rr(\Gg(X_1^n))=\sup_{g \in \Gg} \EE_\sigma \frac 1 n  \sumin \sigma_i g(X_i)$ where $(\sigma_i)_i$ are iid Rademacher random variables.
Besides, when $\Gg$ is a ball of radius $\lambda$ in a RKHS with kernel $k$ the Rademacher complexity is bounded by
$$
\Rr(\Gg_\lambda(X_1^n)) \leq \frac \lambda n \sqrt{\sumin k(X_i,X_i)}.
$$
\end{prop}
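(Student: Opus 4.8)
The statement has two independent halves, which I would prove in sequence: (1) a symmetrization step reducing the expected uniform deviation to twice the expected Rademacher complexity of the composed loss class $\{x\mapsto\ell(g,x):g\in\Gg\}$, followed by a contraction step peeling off the $B$-Lipschitz loss to replace that by $B\,\Rr(\Gg(X_1^n))$; and (2) an explicit Cauchy--Schwarz computation of the Rademacher complexity of an RKHS ball. Here ``$B$-Lipschitz loss'' is taken to mean $|\ell(g,x)-\ell(g',x)| \le B\,|g(x)-g'(x)|$ for every $x$. The constants $2$ (from symmetrization) and $B$ (from contraction) will combine to give $2B$.

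\textbf{Symmetrization.} I would introduce an independent ``ghost'' sample $X_1',\dots,X_n'$ drawn from $\alpha$. Since $\EE_\alpha\ell(g,X)=\EE_{X'}\big[\frac{1}{n}\sum_i\ell(g,X_i')\big]$ for each fixed $g$, Jensen's inequality (moving $\EE_{X'}$ outside the supremum) gives
\[\EE\Big[\sup_{g}\Big(\EE_\alpha\ell(g,X)-\frac{1}{n}\sum_{i}\ell(g,X_i)\Big)\Big]\le\EE_{X,X'}\Big[\sup_{g}\frac{1}{n}\sum_{i}\big(\ell(g,X_i')-\ell(g,X_i)\big)\Big].\]
Because each difference $\ell(g,X_i')-\ell(g,X_i)$ has a law symmetric under swapping $X_i\leftrightarrow X_i'$, multiplying the $i$-th term by an independent Rademacher sign $\sigma_i$ does not change the joint distribution; splitting the resulting supremum over the two halves and using the symmetry of $\sigma$ bounds the right-hand side by $2\,\EE_{X}\big[\EE_\sigma\sup_g\frac{1}{n}\sum_i\sigma_i\ell(g,X_i)\big]$, i.e.\ twice the expected Rademacher complexity of the loss class.

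\textbf{Contraction (the main obstacle).} Conditioning on the sample, I would invoke the Ledoux--Talagrand contraction (comparison) inequality with contraction constant $B$, using that $z\mapsto\ell(g,\cdot)$ evaluated at $X_i$ is $B$-Lipschitz in the prediction $z=g(X_i)$, to obtain
\[\EE_\sigma\Big[\sup_{g\in\Gg}\frac{1}{n}\sum_{i}\sigma_i\,\ell(g,X_i)\Big]\le B\,\EE_\sigma\Big[\sup_{g\in\Gg}\frac{1}{n}\sum_{i}\sigma_i\,g(X_i)\Big]=B\,\Rr(\Gg(X_1^n)),\]
where the extra additive term produced by recentering the loss at $g\equiv0$ drops out since its $\EE_\sigma$ vanishes ($\EE_\sigma\sigma_i=0$). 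Combining with the symmetrization step and taking expectation over the sample yields the first inequality with constant $2B$. I expect this to be the crux: unlike symmetrization it is not a one-line manipulation — the standard proof peels off the Rademacher signs one coordinate at a time via a Lipschitz rearrangement over pairs of suprema — and it is exactly here, not in symmetrization, that Lipschitzness of $\ell$ rather than mere boundedness is used. I would either reproduce this conditioning argument or cite Ledoux--Talagrand directly.

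\textbf{Rademacher complexity of an RKHS ball.} Fix the sample and let $\mathcal H_k$ be the RKHS of $k$. By the reproducing property $g(X_i)=\langle g,k(X_i,\cdot)\rangle_{\mathcal H_k}$, hence $\frac{1}{n}\sum_i\sigma_i g(X_i)=\langle g,\frac{1}{n}\sum_i\sigma_i k(X_i,\cdot)\rangle_{\mathcal H_k}$, and Cauchy--Schwarz over the ball $\{\|g\|_{\mathcal H_k}\le\lambda\}$ gives
\[\sup_{\|g\|_{\mathcal H_k}\le\lambda}\frac{1}{n}\sum_{i}\sigma_i g(X_i)=\lambda\Big\|\frac{1}{n}\sum_{i}\sigma_i k(X_i,\cdot)\Big\|_{\mathcal H_k}=\frac{\lambda}{n}\Big(\sum_{i,j}\sigma_i\sigma_j k(X_i,X_j)\Big)^{1/2}.\]
Taking $\EE_\sigma$, Jensen's inequality for the concave square root together with $\EE_\sigma[\sigma_i\sigma_j]=\delta_{ij}$ yields $\EE_\sigma\sup_{\|g\|\le\lambda}\frac{1}{n}\sum_i\sigma_i g(X_i)\le\frac{\lambda}{n}\big(\sum_i k(X_i,X_i)\big)^{1/2}$, i.e.\ $\Rr(\Gg_\lambda(X_1^n))\le\frac{\lambda}{n}\sqrt{\sum_i k(X_i,X_i)}$. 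This half is entirely routine.
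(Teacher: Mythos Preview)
Your proposal is correct and is precisely the standard argument. Note, however, that the paper does not supply its own proof of this proposition: it quotes the result from \citet{bartlett2002rademacher}, pointing to their Theorem~4, Theorem~12 and Lemma~22, and your three steps (symmetrization, Lipschitz contraction, Cauchy--Schwarz in the RKHS) are exactly the contents of those three results, so there is no methodological difference to report.
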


Our problem falls in this framework thanks to the following lemma:
\begin{lem} \label{lem_bound} Let $\Hh^s_\lambda \eqdef \{ u \in \mathbf H^s(\RR^d) \mid \norm{u}_{\mathbf H^s(\RR^d)} \leq \lambda \}$, then there exists $\lambda$ such that:
\begin{align*}
\vert W_\epsilon(\alpha,\beta) &- W_\epsilon(\hat \alpha_n, \hat \beta_n) \vert \leq  \\
&3  \sup_{(u,v) \in (\Hh^s_\lambda)^2}\vert \EE f_\epsilon^{XY}(u,v) -\frac 1 n   \sumin f_\epsilon^{X_iY_i} (u, v) \vert .
\end{align*} 
\end{lem}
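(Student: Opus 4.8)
The plan is to recognize the left-hand side as, up to the additive constant $\epsilon$ that cancels, the difference of the optimal values of the \emph{same} dual objective $\EE[f_\epsilon^{XY}(u,v)]$, evaluated once against the population law $\alpha\otimes\beta$ and once against the empirical law of the $n$ sampled pairs, and then to turn that difference into a \emph{uniform} deviation of an empirical mean from its expectation over a fixed ball of potentials — precisely the object controlled by Proposition~\ref{prop:Bartlett}. Writing $Pf(u,v)\eqdef\EE f_\epsilon^{XY}(u,v)$ and $P_nf(u,v)\eqdef\frac1n\sum_{i=1}^n f_\epsilon^{X_iY_i}(u,v)$, and letting $(u^*,v^*)$ and $(\hat u,\hat v)$ be maximizers of $Pf$ and of $P_nf$ respectively, we have $W_\epsilon(\alpha,\beta)-W_\epsilon(\hat\alpha_n,\hat\beta_n)=Pf(u^*,v^*)-P_nf(\hat u,\hat v)$.

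The key step is to place \emph{both} pairs of potentials in one common Sobolev ball. Theorem~\ref{thm_rkhs} gives a bound on $\norm{\cdot}_{\mathbf H^s}$ that depends only on $\abs{\Xx},\abs{\Yy},d$, the sup-norms $\norminf{c^{(k)}}$, and $\epsilon$ — and never on the marginals. Applying it to $(\alpha,\beta)$ and, separately, to the discrete marginals $(\hat\alpha_n,\hat\beta_n)$ therefore yields a single radius $\lambda=O(1+\epsilon^{-(s-1)})$, with $s>d/2$ (e.g.\ $s=\lfloor d/2\rfloor+1$) so that $\mathbf H^s(\RR^d)$ is an RKHS, for which $(u^*,v^*)\in(\Hh^s_\lambda)^2$ and $(\hat u,\hat v)\in(\Hh^s_\lambda)^2$; here $\hat u,\hat v$ are read as the $\Cc^\infty$ representatives of Proposition~\ref{prop_potentials} (the empirical dual objective depends on the potentials only through their values at the sample points, so this does not change the value of $W_\epsilon(\hat\alpha_n,\hat\beta_n)$). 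Set $S\eqdef\sup_{(u,v)\in(\Hh^s_\lambda)^2}\lvert Pf(u,v)-P_nf(u,v)\rvert$, i.e.\ the right-hand side of the lemma.

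Finally I would split $W_\epsilon(\alpha,\beta)-W_\epsilon(\hat\alpha_n,\hat\beta_n)=\big[Pf(u^*,v^*)-P_nf(u^*,v^*)\big]+\big[P_nf(u^*,v^*)-P_nf(\hat u,\hat v)\big]$. The first bracket has absolute value $\le S$ since $(u^*,v^*)\in(\Hh^s_\lambda)^2$. The second is $\le 0$ by optimality of $(\hat u,\hat v)$ for $P_nf$, and $\ge -2S$ because $P_nf(\hat u,\hat v)-P_nf(u^*,v^*)=\big[P_nf(\hat u,\hat v)-Pf(\hat u,\hat v)\big]+\big[Pf(\hat u,\hat v)-Pf(u^*,v^*)\big]+\big[Pf(u^*,v^*)-P_nf(u^*,v^*)\big]$, whose middle term is $\le 0$ by optimality of $(u^*,v^*)$ for $Pf$ and whose outer terms are each of absolute value $\le S$ (once more, both pairs lie in $(\Hh^s_\lambda)^2$). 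Hence $W_\epsilon(\alpha,\beta)-W_\epsilon(\hat\alpha_n,\hat\beta_n)\in[-3S,S]$, which gives the lemma; a symmetric two-sided argument would in fact sharpen the constant to $1$, but $3$ is all we need. The only genuine obstacle is the second paragraph — forcing the empirical optimizer into \emph{exactly the same} ball $\Hh^s_\lambda$ as the population one, with $\lambda$ independent of $n$ and of $\alpha,\beta$; this rests entirely on the measure-uniformity of Theorem~\ref{thm_rkhs} together with the choice of the smooth representatives $\hat u,\hat v$. Everything else is elementary re-bracketing.
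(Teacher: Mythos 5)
Your proposal is correct and follows essentially the same route as the paper: place both $(u^*,v^*)$ and $(\hat u,\hat v)$ in a single ball $\Hh^s_\lambda$ via the measure-independence of Theorem~\ref{thm_rkhs}, then use the optimality of each pair for its respective objective together with at most three applications of the uniform deviation $S$, differing from the paper only in which intermediate term is inserted before re-bracketing. Your side remark is also accurate — the standard two-sided argument ($Pf(u^*,v^*)-P_nf(\hat u,\hat v)\le Pf(u^*,v^*)-P_nf(u^*,v^*)$ and $\ge Pf(\hat u,\hat v)-P_nf(\hat u,\hat v)$) would give the bound with constant $1$, but the constant $3$ stated in the lemma is all that is needed.
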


\begin{proof} Inserting  $\EE f_\epsilon^{XY}(\hat u,\hat v)$ and using the triangle inequality in \eqref{sup_bound} gives
\begin{align*}
	& \vert W_\epsilon(\alpha,\beta) \!-\! W_\epsilon(\hat \alpha_n, \hat \beta_n) \vert \leq \vert  \EE f_\epsilon^{XY}(u^*,v^*) \!-\! \EE f_\epsilon^{XY}(\hat u,\hat v)   \vert \\  
	 &\quad + \vert   \EE f_\epsilon^{XY}(\hat u,\hat v) -\frac 1 n   \sumin f_\epsilon^{X_iY_i} (\hat u, \hat v)   \vert .
\end{align*}
From Theorem~\ref{thm_rkhs}, we know that the all the dual potentials are bounded in $\mathbf H^s(\RR^d)$ by a constant $\lambda$ which doesn't depend on the measures. Thus the second term is bounded by $\sup_{(u,v) \in (\Hh^s_\lambda)^2} \vert \EE f_\epsilon(u,v) -\frac 1 n   \sumin f_\epsilon (u, v) \vert $ .\\
The first quantity needs to be broken down further. Notice that it is non-negative since $(u^*,v^*)$ is the maximizer of $\EE f_\epsilon(\cdot,\cdot)$  so we can leave out the absolute value. We have:
\begin{align}
\EE f_\epsilon^{XY}&(u^*,v^*) - \EE f_\epsilon^{XY}(\hat u,\hat v)   \leq  \nonumber\\
&\phantom{+}\EE f_\epsilon^{XY}(u^*,v^*) - \frac 1 n   \sumin f_\epsilon^{X_iY_i} (u^*,v^*)  \label{A}\\
& +\frac 1 n   \sumin f_\epsilon^{X_iY_i} (u^*,v^*)  - \frac 1 n   \sumin f_\epsilon^{X_iY_i} (\hat u, \hat v)  \label{B}\\
&+  \frac 1 n   \sumin f_\epsilon^{X_iY_i} (\hat u, \hat v) - \EE f_\epsilon^{XY}(\hat u,\hat v)  \label{C}
\end{align}
Both \eqref{A} and \eqref{C} can be bounded by $\sup_{(u,v) \in (\Hh^s_\lambda)^2} \vert \EE f_\epsilon^{XY}(u,v) -\frac 1 n   \sumin f_\epsilon^{X_iY_i}  (u, v) \vert $ while \eqref{B} is non-positive since $(\hat u,\hat v)$ is the maximizer of $\frac 1 n   \sumin f_\epsilon^{X_iY_i}  (\cdot , \cdot)$.
\end{proof}

To apply Proposition~\ref{prop:Bartlett} to Sinkhorn divergences we need to prove that \emph{(a)} the optimal potentials are in a RKHS and \emph{(b)} our loss function $f^\epsilon$ is Lipschitz in the potentials.

The first point has already been proved in the previous section. The RKHS we are considering is $\mathbf H^s(\RR^d)$ with $s=\lfloor\frac{d}{2} \rfloor+1$. It remains to prove that $f^\epsilon$ is Lipschitz in $(u,v)$ on a certain subspace that contains the optimal potentials.

 \begin{figure*}[h]
  \centering
  \includegraphics[width=.32\textwidth]{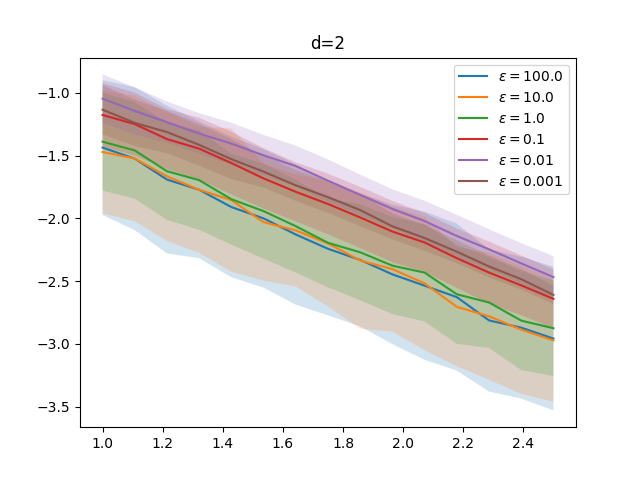} 
  \includegraphics[width=.32\textwidth]{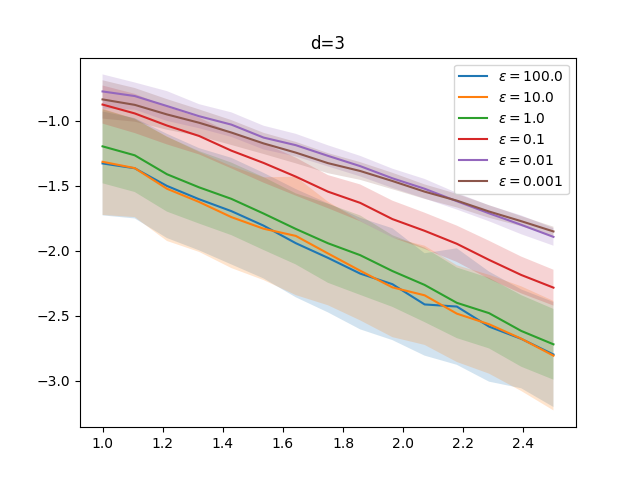}  
  \includegraphics[width=.32\textwidth]{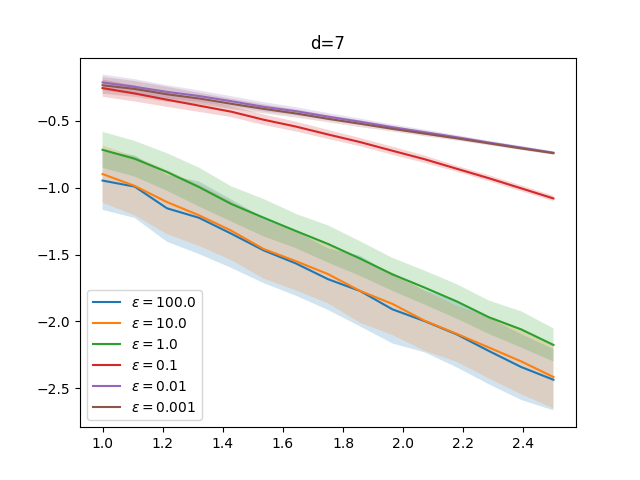} 
  \caption{$\bar W_\epsilon(\hat\alpha_n,\hat\alpha_n')$ as a function of $n$ in log-log space : Influence of $\epsilon$ for fixed $d$ on two uniform distributions on the hypercube with quadratic cost.} \label{fig:d}
  \vspace*{-10pt}
  \end{figure*}

 \begin{figure*}[h]
  \centering
  \includegraphics[width=.32\textwidth]{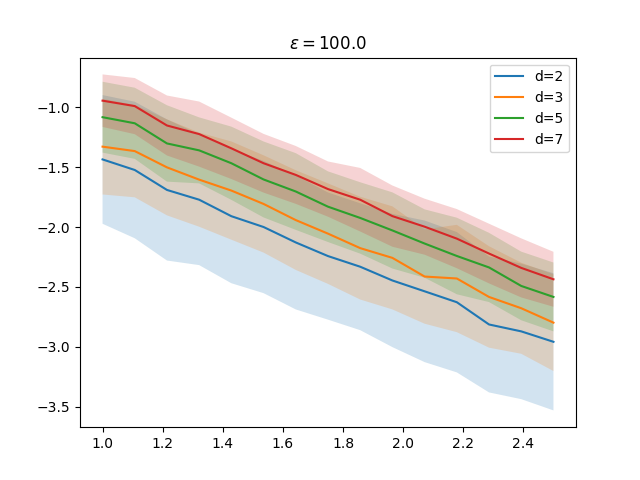} 
  \includegraphics[width=.32\textwidth]{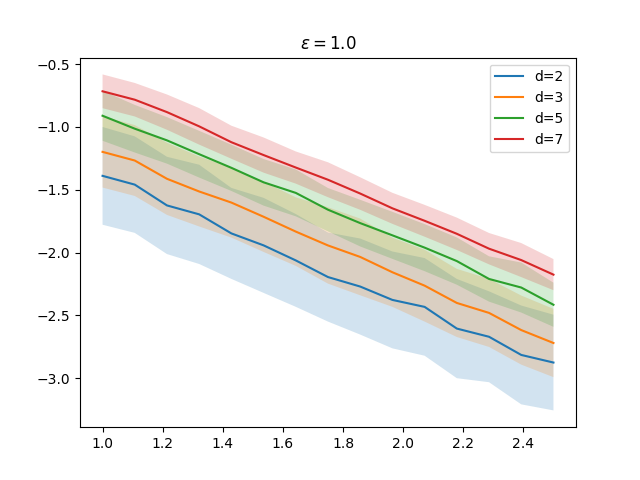}  
  \includegraphics[width=.32\textwidth]{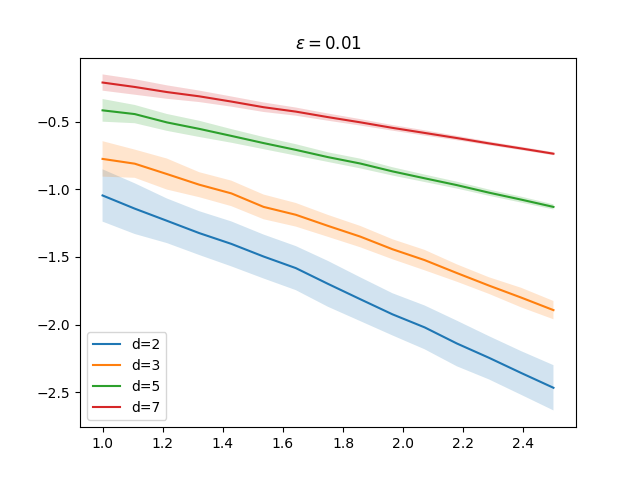} 
  \caption{$\bar W_\epsilon(\hat\alpha_n,\hat\alpha_n')$ as a function of $n$ in log-log space : Influence of $d$ for fixed $\epsilon$ on two uniform distributions on the hypercube with quadratic cost.} \label{fig:epsilon}
    \vspace*{-10pt}
  \end{figure*}

\begin{lem}\label{lem_lipschitz}
Let $\Aa = \{ (u,v) \mid  u \oplus v  \leq 2 L \abs{\Xx} + \norminf{c} \}$. We have:
\begin{itemize}
\item[(i)] the pairs of optimal potentials $(u^*,v^*)$ such that $u^*(0)=0$ belong to $\Aa$,
\item[(ii)] $f^\epsilon$ is B-Lipschitz in $(u,v)$ on $\Aa$ with $B \leq 1 + \exp(2\frac{L \abs{\Xx} + \norminf{c}}{\epsilon})$. 
\end{itemize}
\end{lem}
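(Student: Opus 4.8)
The plan is to establish the two items in turn, relying on Proposition~\ref{prop_potentials} for (i) and on an elementary one-dimensional gradient estimate for (ii).

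\emph{Item (i).} I would first bound $\norminf{u^*}$ and then transfer the bound to $v^*$. Since the optimal potentials are determined only up to an additive constant (for any $t$, $(u^*+t,v^*-t)$ is again optimal), normalizing $u^*$ to vanish at a fixed point of $\Xx$, relabelled as the origin so that $u^*(0)=0$, is legitimate; combined with the $L$-Lipschitz bound of the second item of Proposition~\ref{prop_potentials}, this gives $\abs{u^*(x)}=\abs{u^*(x)-u^*(0)}\le L\norm{x}\le L\abs{\Xx}$ for all $x\in\Xx$, hence $\norminf{u^*}\le L\abs{\Xx}$. Plugging this into the first item of Proposition~\ref{prop_potentials}, read in its symmetric form for $v$, yields for every $y\in\Yy$
\[
v^*(y)\le\max_{x\in\Xx}\bigl(u^*(x)-c(x,y)\bigr)\le\norminf{u^*}+\norminf{c}\le L\abs{\Xx}+\norminf{c}.
\]
Adding the two bounds gives $u^*(x)+v^*(y)\le 2L\abs{\Xx}+\norminf{c}$ for all $(x,y)$, i.e. $(u^*,v^*)\in\Aa$.

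\emph{Item (ii).} Fix $(x,y)\in\Xx\times\Yy$ and view $f_\epsilon^{xy}(u,v)$ as a function of the two real numbers $u(x)$ and $v(y)$. For two pairs $(u,v),(u',v')\in\Aa$, write, with $A\eqdef u(x)+v(y)-c(x,y)$ and $A'\eqdef u'(x)+v'(y)-c(x,y)$,
\[
f_\epsilon^{xy}(u,v)-f_\epsilon^{xy}(u',v')=\bigl[(u(x)-u'(x))+(v(y)-v'(y))\bigr]-\epsilon\bigl(e^{A/\epsilon}-e^{A'/\epsilon}\bigr).
\]
The bracketed term is bounded in absolute value by $\norminf{u-u'}+\norminf{v-v'}$. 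For the second term, the mean value theorem applied to $t\mapsto e^{t/\epsilon}$ gives $\epsilon(e^{A/\epsilon}-e^{A'/\epsilon})=e^{\xi/\epsilon}(A-A')$ for some $\xi$ between $A$ and $A'$; since both pairs lie in $\Aa$, $A,A'\le\bigl(2L\abs{\Xx}+\norminf{c}\bigr)-c(x,y)\le 2\bigl(L\abs{\Xx}+\norminf{c}\bigr)$, so $e^{\xi/\epsilon}\le\exp\bigl(2(L\abs{\Xx}+\norminf{c})/\epsilon\bigr)$, while $\abs{A-A'}\le\norminf{u-u'}+\norminf{v-v'}$. Summing the two contributions gives
\[
\bigl| f_\epsilon^{xy}(u,v)-f_\epsilon^{xy}(u',v')\bigr|\le\Bigl(1+\exp\tfrac{2(L\abs{\Xx}+\norminf{c})}{\epsilon}\Bigr)\bigl(\norminf{u-u'}+\norminf{v-v'}\bigr),
\]
which is the claimed $B$-Lipschitz bound, the metric on potential pairs being $\norminf{u-u'}+\norminf{v-v'}$.

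There is no deep obstacle here. The points that need care are, in (i), justifying the normalization $u^*(0)=0$ and deriving the uniform bound $\norminf{u^*}\le L\abs{\Xx}$ from it, and, in (ii), getting the constant in the exponent right: the estimate $A,A'\le 2(L\abs{\Xx}+\norminf{c})$ must simultaneously invoke the defining inequality of $\Aa$ (which only controls $u(x)+v(y)$, not $u$ or $v$ separately) and the crude bound $-c(x,y)\le\norminf{c}$, and one should be explicit that ``$B$-Lipschitz in $(u,v)$'' refers to the sum-of-sup-norms metric so that the constant comes out as $1+\exp(2(L\abs{\Xx}+\norminf{c})/\epsilon)$ rather than something larger.
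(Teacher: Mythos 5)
Your proof is correct and follows essentially the same route as the paper: item (i) uses the normalization freedom plus the Lipschitz bound from Proposition~\ref{prop_potentials} to get $\norminf{u^*}\le L\abs{\Xx}$, then the symmetric first item of that proposition to bound $v^*$, exactly as the paper does; item (ii) controls the exponential term on $\Aa$ with the same constant, your mean-value-theorem argument being just a slightly more explicit version of the paper's gradient bound $\abs{\nabla f^\epsilon(u,v)}\le 1+\exp\bigl(2(L\abs{\Xx}+\norminf{c})/\epsilon\bigr)$. The only difference is that you spell out the choice of metric (sum of sup norms) and the normalization argument, which the paper leaves implicit.
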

\begin{proof}
Let us prove that we can restrict ourselves to a subspace on which $f^\epsilon$ is Lipschitz in $(u,v)$.
$$f^\epsilon (u,v,x,y) = u(x) + v(y) - \epsilon  \exp\left(\frac {u(x)+v(y) - c(x,y)}{\epsilon}\right)$$
$$\nabla f^\epsilon (u,v) = 1 - \exp\left(\frac{u+v-c}{\epsilon}\right).$$

To ensure that $f^\epsilon$ is Lipschitz, we simply need to ensure that the quantity inside the exponential is upperbounded at optimality and then restrict the function to all $(u,v)$ that satisfy that bound.

Recall the bounds on the optimal potentials from Proposition~\ref{prop_potentials}. We have that $\forall x \in \Xx, y \in \Yy$,  
\eq{u(x)\leq L \abs{x} \quad \text{and} \quad v(y) \leq \max_x u(x) - c(x,y).}
Since we assumed $\Xx$ to be a bounded set, denoting by $\abs{\Xx}$ the diameter of the space we get that at optimality
$\forall x \in \Xx, y \in \Yy$
\eq{u(x) + v(y)  \leq 2 L \abs{\Xx} + \norminf{c}.}

Let us denote  $\Aa = \{ (u,v) \in (\mathbf{H}^s(\RR^d))^2 \mid  u \oplus v  \leq 2 L \abs{\Xx} + \norminf{c} \}$, we have that $\forall (u,v) \in \Aa$, 
\[\abs{\nabla f^\epsilon (u,v) }\leq 1 + \exp(2 \frac{L \abs{\Xx} + \norminf{c}}{\epsilon}).\qedhere\]
\end{proof}

We now have all the required elements to prove our sample complexity result on the Sinkhorn loss, by applying Proposition~\ref{prop:Bartlett}.

\begin{proof} (Theorem~\ref{sample_complexity_thm})
Since $f_\epsilon$ is Lipschitz and we are optimizing over $\mathbf H^s(\RR^d)$ which is a RKHS, we can apply Proposition~\ref{prop:Bartlett} to bound the $\sup$ in  Lemma~\ref{lem_bound}. We get:
\eq{\EE \vert W_\epsilon(\alpha,\beta) - W_\epsilon(\hat \alpha_n, \hat \beta_n) \vert \leq 3  \frac { 2 B \lambda} {n} \EE \sqrt{\sumin  k(X_i,X_i)}}
where $B \leq 1 + \exp(2\frac{L \abs{\Xx} + \norminf{c}}{\epsilon})$ (Lemma~\ref{lem_lipschitz}),
 $\lambda = O(\max(1,\frac{1}{\epsilon^{d/2}}))$ (Theorem~\ref{thm_rkhs}).
We can further bound $\sqrt{\sumin k(X_i,X_i)}$ by $\sqrt{n \max_{x \in \Xx} k(x,x)}$ where $k$ is the kernel associated to $H^s(\RR^d)$ (usually called Matern or Sobolev kernel) and thus $\max_{x \in \Xx} k(x,x) = k(0,0):= K$ which doesn't depend on $n$ or $\epsilon$.
Combining all these bounds, we get the convergence rate in $\frac{1}{\sqrt{n}}$ with different asymptotic behaviors in $\epsilon$ when it is large or small.
\end{proof}

Using similar arguments, we can also derive a concentration result:
\begin{cor}
With probability at least $1-\delta$,
\eq{\vert W_\epsilon(\alpha,\beta) - W_\epsilon(\hat \alpha_n, \hat \beta_n) \vert  \leq 6 B \frac{\lambda K}{\sqrt{n}} + C \sqrt{\frac{2 \log \frac{1}{\delta}}{n}}}
where $B,\lambda,K$ are defined in the proof above, and $C = \kappa + \epsilon \exp(\frac{\kappa}{\epsilon})$ with $\kappa = 2 L \abs{\Xx}+\norminf{c}$.
\label{cor:concentration}
\end{cor}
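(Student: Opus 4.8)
The plan is to promote the in-expectation bound of Theorem~\ref{sample_complexity_thm} to a high-probability statement by applying McDiarmid's bounded-differences inequality directly to $H_n \eqdef \vert W_\epsilon(\alpha,\beta)-W_\epsilon(\hat\alpha_n,\hat\beta_n)\vert$, seen as a function of the i.i.d.\ pairs $Z_i=(X_i,Y_i)\sim\alpha\otimes\beta$, $i=1,\dots,n$. (Applying McDiarmid to the supremum functional of Lemma~\ref{lem_bound} instead would carry an extra factor $3$ into the deviation term, so working with $H_n$ directly is what yields the stated constant $C$.) The first ingredient is a two-sided uniform bound on the loss over the region where the empirical maximization effectively takes place. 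Since optimal dual pairs are unique up to the shift $(u,v)\mapsto(u+a,v-a)$ and $f^\epsilon(u+a,v-a,x,y)=f^\epsilon(u,v,x,y)$ for any constant $a$, the empirical optimizer may be chosen with $\hat u(0)=0$ without changing $W_\epsilon(\hat\alpha_n,\hat\beta_n)$; for that representative, the first two items of Proposition~\ref{prop_potentials} (which hold verbatim for empirical marginals) give $\norminf{\hat u}\le L\abs{\Xx}$ and $\norminf{\hat v}\le L\abs{\Xx}+\norminf c$. Hence the maximization defining $W_\epsilon(\hat\alpha_n,\hat\beta_n)-\epsilon$ may be restricted to pairs with $\norminf u\le L\abs{\Xx}$, $\norminf v\le L\abs{\Xx}+\norminf c$, on which $\vert u(x)+v(y)\vert\le\kappa$ and therefore $\vert f_\epsilon^{xy}(u,v)\vert\le\kappa+\epsilon\,e^{\kappa/\epsilon}=C$ for all $x\in\Xx$, $y\in\Yy$.

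Next I would verify the bounded-differences property. Replacing one pair $Z_j$ by $Z_j'$ changes each admissible empirical average $\frac1n\sumin f_\epsilon^{X_iY_i}(u,v)$ by at most $\frac1n\vert f_\epsilon^{X_j'Y_j'}(u,v)-f_\epsilon^{X_jY_j}(u,v)\vert\le\frac{2C}{n}$, uniformly over the admissible potentials; since a supremum of a family of $\frac{2C}{n}$-stable functions is itself $\frac{2C}{n}$-stable, $W_\epsilon(\hat\alpha_n,\hat\beta_n)$ has bounded differences $\frac{2C}{n}$, and hence so does $H_n$ because $\big\vert\,|a-b|-|a-c|\,\big\vert\le|b-c|$ (taking $a=W_\epsilon(\alpha,\beta)$). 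McDiarmid's inequality then shows that $H_n\le\EE H_n+t$ fails with probability at most $\exp\big(-2t^2/(n(2C/n)^2)\big)=\exp(-nt^2/(2C^2))$; equating this with $\delta$ yields $t=C\sqrt{2\log(1/\delta)/n}$.

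Finally I would insert the in-expectation estimate from the proof of Theorem~\ref{sample_complexity_thm}, $\EE H_n\le 6B\lambda K/\sqrt n$, with $B\le 1+\exp(2(L\abs{\Xx}+\norminf c)/\epsilon)$, $\lambda=O(\max(1,\epsilon^{-d/2}))$ and $K$ the ($n$- and $\epsilon$-independent) constant coming from the diagonal of the Sobolev kernel. Combining, with probability at least $1-\delta$ one gets $H_n\le \frac{6B\lambda K}{\sqrt n}+C\sqrt{\frac{2\log(1/\delta)}{n}}$, which is the claim. The one genuinely delicate point is the first step: because the exponential term of $f^\epsilon$ is \emph{a priori} unbounded in the potentials, one must argue — via the shift invariance of $f^\epsilon$ and Proposition~\ref{prop_potentials} applied to the empirical problem — that the empirical maximization and its one-sample perturbations may be confined to a set on which $f^\epsilon$ is two-sidedly bounded by $C$; granting that, the bounded-differences check and McDiarmid's inequality are routine.
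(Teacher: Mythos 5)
Your argument is correct, but it is not the route the paper takes, and the comparison is worth making explicit. The paper applies McDiarmid's inequality to the supremum functional $g=\sup_{(u,v)\in(\Hh^s_\lambda)^2}\big(\EE f_\epsilon^{XY}(u,v)-\frac 1n\sumin f_\epsilon^{X_iY_i}(u,v)\big)$ from Lemma~\ref{lem_bound}, using the bound $f_\epsilon\le C$ coming from Lemma~\ref{lem_lipschitz} to get the $2C/n$ stability, and then combines $g\le\EE g+C\sqrt{2\log(1/\delta)/n}$ with $\EE g\le 2B\lambda K/\sqrt n$; note that finishing that argument through the factor $3$ of Lemma~\ref{lem_bound} actually produces $3C$ in the deviation term, so the paper's stated constant $C$ follows from its own proof only up to that factor. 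You instead apply McDiarmid directly to $\vert W_\epsilon(\alpha,\beta)-W_\epsilon(\hat\alpha_n,\hat\beta_n)\vert$, which requires the extra -- and valid -- observation that, by the shift invariance $(u,v)\mapsto(u+a,v-a)$ of $f^\epsilon$ and by the first two items of Proposition~\ref{prop_potentials} applied to the empirical marginals (their proof only uses the dual optimality condition and that the marginals are probability measures, so it holds for discrete measures), the empirical maximization can be confined, for every realization of the sample, to potentials with $\norminf{u}\le L\abs{\Xx}$ and $\norminf{v}\le L\abs{\Xx}+\norminf{c}$; on that set $\vert f_\epsilon\vert\le C$ uniformly, so $W_\epsilon(\hat\alpha_n,\hat\beta_n)$ is a supremum of $2C/n$-stable functions, hence $2C/n$-stable, and the reverse triangle inequality transfers this to the absolute difference. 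What your route buys is the stated constant $C$ exactly, and it avoids the paper's implicit need to restrict the supremum over the ball $\Hh^s_\lambda$ to the set $\Aa$ in order to bound $f_\epsilon$; what the paper's route buys is brevity, since it reuses Lemma~\ref{lem_bound} wholesale. One shared caveat, not a gap on your side: the bound $\vert f_\epsilon^{xy}(u,v)\vert\le\kappa+\epsilon e^{\kappa/\epsilon}$ implicitly uses $c\ge 0$ (otherwise the exponent should be $(\kappa+\norminf{c})/\epsilon$), and the normalization $\hat u(0)=0$ presumes $0\in\Xx$ (otherwise anchor at some $x_0\in\Xx$) -- the paper makes exactly the same simplifications.
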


\begin{proof}
We apply the bounded differences (Mc Diarmid) inequality to $g: (x_1,\dots,x_n) \mapsto \sup_{u,v \in \Hh_\lambda^s}(\EE f_\epsilon^{XY} - \frac{1}{n}f_\epsilon^{X_i,Y_i})$. From Lemma~\ref{lem_lipschitz} we get that $\forall x,y$, $f_\epsilon^{xy}(u,v) \leq \kappa + \epsilon e^{\kappa/\epsilon}\eqdef C$, and thus, changing one of the variables in $g$ changes the value of the function by at most $2C/n$. Thus the bounded differences inequality gives 
\eq{
\PP\left(\abs{g(X_1,\dots ,X_n) - \EE g(X_1,\dots ,X_n)} >t \right) \leq 2 \exp(\frac{t^2n}{2C^2})}
Choosing $t  = C\sqrt{\frac{2 \log\frac{1}{\delta}}{n}}$ yields that with probability at least $1-\delta$
\eq{ g(X_1,\dots ,X_n) \leq \EE g(X_1,\dots ,X_n) + C \sqrt{\frac{2 \log \frac{1}{\delta}}{n}}
}
and from Theorem~\ref{sample_complexity_thm} we already have
\eq{\EE g(X_1,\dots ,X_n) = \EE \sup_{u,v \in \Hh_\lambda^s}(\EE f_\epsilon^{XY} - \frac{1}{n}f_\epsilon^{X_i,Y_i}) \leq  \frac { 2 B \lambda K} {\sqrt{n}}.}\end{proof}
%\begin{align*}
%\sup_{u,v}& \vert \EE f_\epsilon^{XY} (u,v) -\frac 1 n   \sumin f_\epsilon^{X_iY_i} (u, v) \vert \leq \\
%&\EE \left[\sup_{u,v} \vert \EE f_\epsilon^{XY} (u,v) -\frac 1 n   \sumin f_\epsilon^{X_iY_i}  (u, v) \vert \right] + \sqrt{\frac{2 \log \frac{1}{\delta}}{n}}
%\end{align*}

 \begin{figure*}[h]
  \centering
  \includegraphics[width=.32\textwidth]{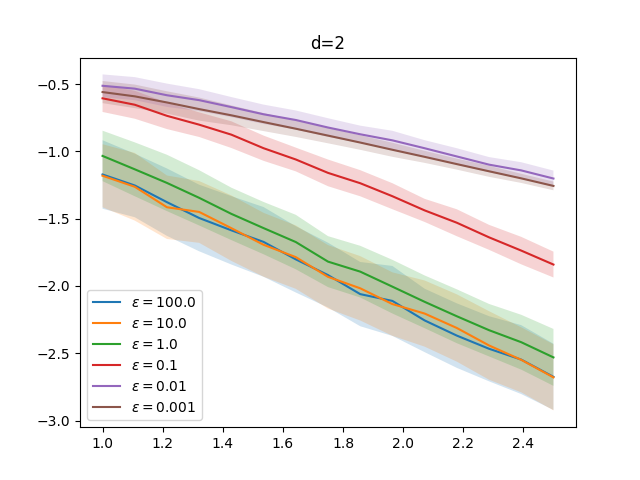} 
  \includegraphics[width=.32\textwidth]{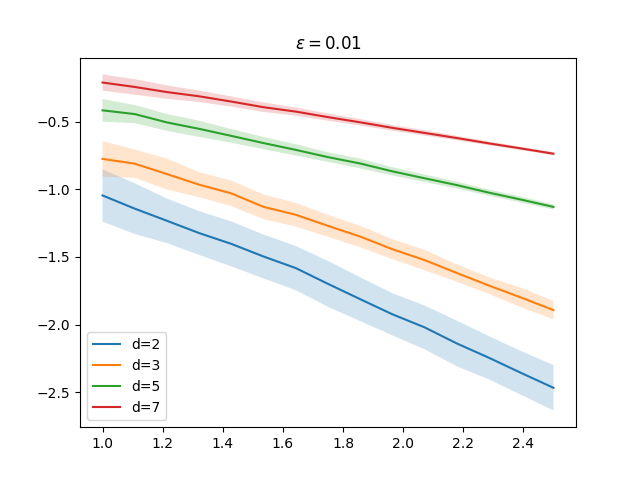}  
  \includegraphics[width=.32\textwidth]{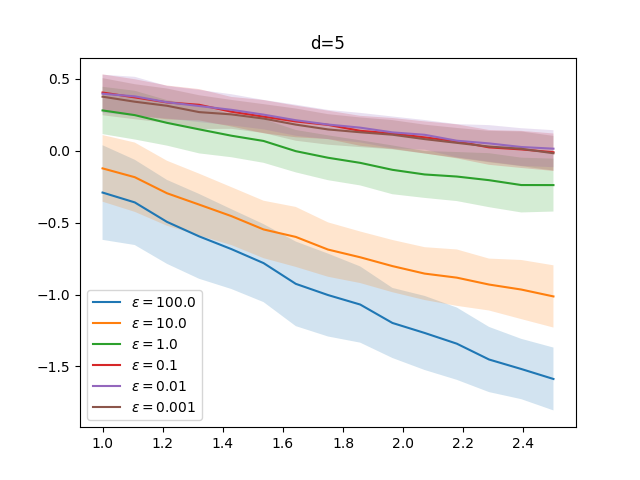} 
  \caption{$\bar W_\epsilon(\hat\alpha_n,\hat\alpha_n')$ as a function of $n$ in log-log space - cost $c(x,y)=\norm{x-y}_1$ with uniform distributions (two leftmost figures) and quadratic cost $c(x,y)=\norm{x-y}_2^2$ with standard normal distributions (right figure).} \label{fig:other}
  \end{figure*}

\section{Experiments} We conclude with some numerical experiments on the sample complexity of Sinkhorn Divergences. Since there are no explicit formulas for $W_\epsilon$ in general, we consider $\bar W_\epsilon(\hat\alpha_n,\hat\alpha_n ')$ where $\hat \alpha_n \eqdef \frac{1}{n} \sumin \delta_{Xi}$, $\hat \alpha_n' \eqdef \frac{1}{n} \sumin \delta_{Xi'}$ and $(X_1,\dots,X_n)$ and $(X_1',\dots,X_n')$ are two independent $n$-samples from $\alpha$. Note that we use in this section the normalized Sinkhorn Divergence as defined in~\eqref{SinkhornDiv}, since we know that $\bar W_\epsilon(\alpha,\alpha) =0$ and thus $\bar W_\epsilon(\hat\alpha_n,\hat\alpha_n') \rightarrow 0$ as $\n \rightarrow +\infty$ .

Each of the experiments is run $300$ times, and we plot the average of $\bar W_\epsilon(\hat\alpha_n,\hat\alpha_n')$ as a function of $n$ in log-log space, with shaded standard deviation bars.

First, we consider the uniform distribution over a hypercube with the standard quadratic cost $c(x,y) = \norm{x-y}_2^2$, which falls within our framework, as we are dealing with a $\Cc^\infty$ cost on a bounded domain.
Figure~\ref{fig:d} shows the influence of the dimension $d$ on the convergence, while Figure~\ref{fig:epsilon} shows the influence of the regularization $\epsilon$ on the convergence for a given dimension. The influence of $\epsilon$ on the convergence rate increases with the dimension: the curves are almost parallel for all values of $\epsilon$ in dimension 2 but they get further apart as dimension increases. As expected from our bound, there is a cutoff which happens here at $\epsilon =1$. All values of $\epsilon \geq 1$ have similar convergence rates, and the dependence on $\frac{1}{\epsilon}$ becomes clear for smaller values. The same cutoff appears when looking at the influence of the dimension on the convergence rate for a fixed $\epsilon$. The curves are parallel for all dimensions for $\epsilon \geq 1$ but they have very different slopes for smaller $\epsilon$. 

We relax next some of the assumptions needed in our theorem to see how the Sinkhorn divergence behaves empirically. First we relax the regularity assumption on the cost, using $c(x,y) = \norm{x-y}_1$. As seen on the two left images in figure \ref{fig:other} the behavior is very similar to the quadratic cost but with a more pronounced influence of $\epsilon$, even for small dimensions. The fact that the convergence rate gets slower as $\epsilon$ gets smaller is already very clear in dimension 2, which wasn't the case for the quadratic cost. The influence of the dimension for a given value of $\epsilon$ is not any different however.

We also relax the bounded domain assumption, considering a standard normal distribution over $\RR^d$ with a quadratic cost. While the influence of $\epsilon$ on the convergence rate is still obvious, the influence of the dimension is less clear. There is also a higher variance, which can be expected as the concentration bound from Corollary~\ref{cor:concentration} depends on the diameter of the domain.

For all curves, we observe that $d$ and $\epsilon$ impact variance, with much smaller variance for small values of $\epsilon$ and high dimensions. From the concentration bound, the dependency on $\epsilon$ coming from the uniform bound on $\f_\epsilon$ is of the form $\epsilon \exp(\kappa/\epsilon)$, suggesting higher variance for small values of $\epsilon$. This could indicate that our uniform bound on $f_\epsilon$ is not tight, and we should consider other methods to get tighter bounds in further work.

\section{Conclusion}
We have presented two convergence theorems for SDs: a bound on the approximation error of OT and a sample complexity bound for empirical Sinkhorn divergences. The $1/\sqrt{n}$ convergence rate is similar to MMD, but with a constant that depends on the inverse of the regularization parameter, which nicely complements the interpolation property of SDs pointed out in recent papers. Furthermore, the reformulation of SDs as the maximization of an expectation in a RKHS ball also opens the door to a better use of kernel-SGD for the computation of SDs.

Our numerical experiments suggest some open problems. It seems that the convergence rate still holds for unbounded domains and non-smooth cost functions. Besides, getting tighter bounds in our theorem might allow us to derive a sharp estimate on the optimal $\epsilon$ to approximate OT for a given $n$, by combining our two convergence theorems together.

\bibliographystyle{myabbrvnat}
\bibliography{biblio}

\begin{thebibliography}{23}
\providecommand{\natexlab}[1]{#1}
\providecommand{\url}[1]{\texttt{#1}}
\expandafter\ifx\csname urlstyle\endcsname\relax
  \providecommand{\doi}[1]{doi: #1}\else
  \providecommand{\doi}{doi: \begingroup \urlstyle{rm}\Url}\fi

\bibitem[Ambrosio et~al.(2006)Ambrosio, Gigli, and
  Savar{\'e}]{ambrosio2006gradient}
L.~Ambrosio, N.~Gigli, and G.~Savar{\'e}.
\newblock \emph{Gradient flows in metric spaces and in the space of probability
  measures}.
\newblock Springer, 2006.

\bibitem[Arjovsky et~al.(2017)Arjovsky, Chintala, and Bottou]{WassersteinGAN}
M.~Arjovsky, S.~Chintala, and L.~Bottou.
\newblock Wasserstein gan.
\newblock \emph{arXiv preprint arXiv:1701.07875}, 2017.

\bibitem[Bartlett and Mendelson(2002)]{bartlett2002rademacher}
P.~L. Bartlett and S.~Mendelson.
\newblock Rademacher and gaussian complexities: Risk bounds and structural
  results.
\newblock \emph{Journal of Machine Learning Research}, 3\penalty0
  (Nov):\penalty0 463--482, 2002.

\bibitem[Beaumont et~al.(2002)Beaumont, Zhang, and Balding]{ABC}
M.~A. Beaumont, W.~Zhang, and D.~J. Balding.
\newblock Approximate bayesian computation in population genetics.
\newblock \emph{Genetics}, 162\penalty0 (4):\penalty0 2025--2035, 2002.

\bibitem[Bigot et~al.(2017)Bigot, Cazelles, and Papadakis]{bigot2017central}
J.~Bigot, E.~Cazelles, and N.~Papadakis.
\newblock Central limit theorems for sinkhorn divergence between probability
  distributions on finite spaces and statistical applications.
\newblock \emph{arXiv preprint arXiv:1711.08947}, 2017.

\bibitem[Calder{\'o}n(1961)]{calderon1961lebesgue}
A.~Calder{\'o}n.
\newblock Lebesgue spaces of differentiable functions.
\newblock In \emph{Proc. Sympos. Pure Math}, volume~4, pages 33--49, 1961.

\bibitem[Carlier et~al.(2017)Carlier, Duval, Peyr{\'e}, and
  Schmitzer]{Carlier2017}
G.~Carlier, V.~Duval, G.~Peyr{\'e}, and B.~Schmitzer.
\newblock Convergence of entropic schemes for optimal transport and gradient
  flows.
\newblock \emph{SIAM Journal on Mathematical Analysis}, 49\penalty0
  (2):\penalty0 1385--1418, 2017.

\bibitem[Chen et~al.(2016)Chen, Georgiou, and Pavon]{chen2016entropic}
Y.~Chen, T.~Georgiou, and M.~Pavon.
\newblock Entropic and displacement interpolation: a computational approach
  using the hilbert metric.
\newblock \emph{SIAM Journal on Applied Mathematics}, 76\penalty0 (6):\penalty0
  2375--2396, 2016.

\bibitem[Courty et~al.(2014)Courty, Flamary, and Tuia]{courty2014domain}
N.~Courty, R.~Flamary, and D.~Tuia.
\newblock Domain adaptation with regularized optimal transport.
\newblock In \emph{Joint European Conference on Machine Learning and Knowledge
  Discovery in Databases}, pages 274--289. Springer, 2014.

\bibitem[Cuturi(2013)]{CuturiSinkhorn}
M.~Cuturi.
\newblock Sinkhorn distances: Lightspeed computation of optimal transport.
\newblock In \emph{Adv. in Neural Information Processing Systems}, pages
  2292--2300, 2013.

\bibitem[Dudley(1969)]{dudley1969speed}
R.~Dudley.
\newblock The speed of mean glivenko-cantelli convergence.
\newblock \emph{The Annals of Mathematical Statistics}, 40\penalty0
  (1):\penalty0 40--50, 1969.

\bibitem[Frogner et~al.(2015)Frogner, Zhang, Mobahi, Araya, and
  Poggio]{2015-Frogner}
C.~Frogner, C.~Zhang, H.~Mobahi, M.~Araya, and T.~Poggio.
\newblock Learning with a {W}asserstein loss.
\newblock In \emph{Adv. in Neural Information Processing Systems}, pages
  2044--2052, 2015.

\bibitem[Genevay et~al.(2016)Genevay, Cuturi, Peyr{\'e}, and
  Bach]{2016-genevay-nips}
A.~Genevay, M.~Cuturi, G.~Peyr{\'e}, and F.~Bach.
\newblock Stochastic optimization for large-scale optimal transport.
\newblock In D.~D. Lee, U.~V. Luxburg, I.~Guyon, and R.~Garnett, editors,
  \emph{Proc. NIPS'16}, pages 3432--3440. Curran Associates, Inc., 2016.

\bibitem[Genevay et~al.(2018)Genevay, Peyre, and Cuturi]{genevay2018learning}
A.~Genevay, G.~Peyre, and M.~Cuturi.
\newblock Learning generative models with sinkhorn divergences.
\newblock In \emph{International Conference on Artificial Intelligence and
  Statistics}, pages 1608--1617, 2018.

\bibitem[Gretton et~al.(2006)Gretton, Borgwardt, Rasch, Sch{\"o}lkopf, and
  Smola]{GrettonMMD}
A.~Gretton, K.~Borgwardt, M.~Rasch, B.~Sch{\"o}lkopf, and A.~Smola.
\newblock A kernel method for the two-sample-problem.
\newblock In \emph{Adv. in Neural Information Processing Systems}, pages
  513--520, 2006.

\bibitem[Kantorovich(1942)]{Kantorovich42}
L.~Kantorovich.
\newblock On the transfer of masses (in {R}ussian).
\newblock \emph{Doklady Akademii Nauk}, 37\penalty0 (2):\penalty0 227--229,
  1942.

\bibitem[Kusner et~al.(2015)Kusner, Sun, Kolkin, and
  Weinberger]{kusner2015word}
M.~Kusner, Y.~Sun, N.~Kolkin, and K.~Q. Weinberger.
\newblock From word embeddings to document distances.
\newblock In \emph{Proc. of the 32nd Intern. Conf. on Machine Learning}, pages
  957--966, 2015.

\bibitem[Li et~al.(2017)Li, Chang, Cheng, Yang, and P{\'o}czos]{MMDGAN}
C.-L. Li, W.-C. Chang, Y.~Cheng, Y.~Yang, and B.~P{\'o}czos.
\newblock {MMD GAN}: Towards deeper understanding of moment matching network.
\newblock \emph{arXiv preprint arXiv:1705.08584}, 2017.

\bibitem[Li et~al.(2015)Li, Swersky, and Zemel]{li2015generative}
Y.~Li, K.~Swersky, and R.~Zemel.
\newblock Generative moment matching networks.
\newblock In \emph{International Conference on Machine Learning}, pages
  1718--1727, 2015.

\bibitem[Ramdas et~al.(2017)Ramdas, Trillos, and Cuturi]{ramdas2017wasserstein}
A.~Ramdas, N.~G. Trillos, and M.~Cuturi.
\newblock On wasserstein two-sample testing and related families of
  nonparametric tests.
\newblock \emph{Entropy}, 19\penalty0 (2):\penalty0 47, 2017.

\bibitem[Salimans et~al.(2018)Salimans, Zhang, Radford, and Metaxas]{OTGAN}
T.~Salimans, H.~Zhang, A.~Radford, and D.~Metaxas.
\newblock Improving {GAN}s using optimal transport.
\newblock In \emph{International Conference on Learning Representations}, 2018.

\bibitem[Sriperumbudur et~al.(2012)Sriperumbudur, Fukumizu, Gretton,
  Schoelkopf, and Lanckriet]{empmmd}
B.~Sriperumbudur, K.~Fukumizu, A.~Gretton, B.~Schoelkopf, and G.~Lanckriet.
\newblock On the empirical estimation of integral probability metrics.
\newblock \emph{Electronic Journal of Statistics}, 6:\penalty0 1550--1599,
  2012.

\bibitem[Weed and Bach(2017)]{weed2017sharp}
J.~Weed and F.~Bach.
\newblock Sharp asymptotic and finite-sample rates of convergence of empirical
  measures in wasserstein distance.
\newblock \emph{arXiv preprint arXiv:1707.00087}, 2017.

\end{thebibliography}
\normalsize

\onecolumn
\section*{Appendix}

\begin{proof} (Lemma~\ref{lem_pot_1})
For better clarity, we carry out the computations in dimension 1 but all the arguments are valid in higher dimension and we will clarify delicate points throughout the proof.

Differentiating both sides of the optimality condition \eqref{optim_condition} and rearranging yields
%$$
%-\frac{u'(x)}{\epsilon}\exp(\frac{-u(x)}{\epsilon}) = \int \frac{-c'(x,y)}{\epsilon} \exp(\frac{v(y)-c(x,y)}{\epsilon})\beta(y) dy.
%$$
%And thus the first order derivative of the potential can be expressed as
\begin{equation} \label{uprim}
u'(x) = \int c'(x,y) \gamma_\epsilon(x,y) \beta(y) dy.
\end{equation}

Notice that $\gamma_\epsilon'(x,y) = \frac{u'(x)-c'(x,y)}{\epsilon} \gamma_\epsilon(x,y)$. Thus by immediate recurrence (differentiating both sides of the equality again) we get that
\begin{equation}
u^{(n)}(x) =  \int g_n(x,y) \gamma_\epsilon(x,y) \beta(y) dy,
\end{equation}
where $g_{n+1} (x,y) = g_n' (x,y) + \frac{u'(x)-c'(x,y)}{\epsilon} g_n (x,y)$ and $g_1(x,y) = c'(x,y)$

To extend this first lemma to the $d$-dimensional case, we need to consider the sequence of indexes $\sigma = (\sigma_1,\sigma_2,\dots) \in \{1,\dots,d\}^{\NN}$ which corresponds to the axis along which we successively differentiate. Using the same reasoning as above, it is straightforward to check that
\eq{
\frac{\partial^k u}{\partial x_{\sigma_1}\dots\partial x_{\sigma_k}} = \int g_{\sigma,k} \gamma_{\epsilon}
}
where $g_{\sigma,1} = \frac{\partial c}{\partial x_{\sigma_1}}$ and $g_{\sigma,k+1} = \frac{\partial g_{\sigma,k+1}}{\partial x_{\sigma_{k+1}}} + \frac{1}{\epsilon} \left(  \frac{\partial u}{\partial x_{\sigma_{k+1}}} -  \frac{\partial c}{\partial x_{\sigma_{k+1}}} \right) g_{\sigma,k+1}$

\end{proof}

\begin{proof}(Lemma~\ref{lem_pot_2})
The proof is made by recurrence on the following property : \\
\textit{$P_n$ : For all $j=0,\dots,k $, for all $k=0,\dots,n-2$, $\norminf{g_{n-k}^{(j)}}$ is bounded by a polynomial  in $\frac{1}{\epsilon}$ of order $n-k+j-1$.} \\
Let us initialize the recurrence with $n=2$ 
\begin{eqnarray}
g_2 &= &g_1' + \frac{u'-c'}{\epsilon} g_1 \\
\norminf{g_2} &\leq & \norminf{g_1'} + \frac{\norminf{u'}+\norminf{c'}}{\epsilon} \norminf{g_1} 
\end{eqnarray}
Recall that  $\norminf{u'} = \norminf{g_1} = \norminf{c'}$.   Let $C = \max_{k} \norminf{c^{(k)}}$, we get that $\norminf{g_2}		\leq  C + \frac{C+C}{\epsilon} C$ which is of the required form.

Now assume that $P_n$ is true for some $n\geq2$. This means we have bounds on $g_{n-k}^{(i)}$, for $k=0,\dots,n-2$ and $i=0,\dots,k$. To prove the property at rank $n+1$ we want bounds on $g_{n+1-k}^{(i)}$, for $k=0,\dots,n-1$ and $i=0,\dots,k$. 
The only new quantity that we need to bound are  $g_{n+1-k}^{(k)}, k=0,\dots,n-1$.
Let us start by bounding $g_2^{(n-1)}$ which corresponds to $k=n-1$ and we will do a backward recurrence on $k$. By applying Leibniz formula for the successive derivatives of a product of functions, we get
\begin{eqnarray}
g_2 &=& g_1'+ \frac{u'-c'}{\epsilon} g_1 \\
g_2^{(n-1)} &=& g_1^{(n)} + \sum_{p=0}^{n-1} \binom{n-1}{p} \frac{u^{(p+1)}-c^{(p+1)}}{\epsilon} g_1^{(n-1-p)} \label{Leibniz}\\
\norminf{g_2^{(n-1)}} &\leq & \norminf{g_1^{(n)} }+ \sum_{p=0}^{n-1} \binom{n-1}{p} \frac{\norminf{u^{(p+1)}}+\norminf{c^{(p+1)}}}{\epsilon} \norminf{g_1^{(n-1-p)}} \\
&\leq & C + \sum_{p=0}^{n-1} \binom{n-1}{p} \frac{\norminf{g_{p+1}} + C }{\epsilon} C
\end{eqnarray}
Thanks to $P_n$ we have that  $\norminf{g_{p}}\leq \sum_{i=0}^{p} a_{i,p}\frac{1}{\epsilon^i} , p=1,\dots,n$ so the highest order term in $\epsilon$ in the above inequality is $\frac{1}{\epsilon^n}$. Thus we get
$\norminf{g_2^{(n-1)}} \leq  \sum_{i=0}^{n+1} a_{i,2,n-1}\frac{1}{\epsilon^i}$ which is of the expected order 

Now assume $g_{n+1-j}^{(j)}$ are bounded with the appropriate polynomials for $j < k\leq n-1$. Let us bound $g_{n+1-k}^{(k)}$
\begin{eqnarray}
\norminf{g_{n+1-k}^{(k)}} &\leq & \norminf{g_{n-k}^{(k+1)} }+ \sum_{p=0}^{k} \binom{k}{p} \frac{\norminf{u^{(p+1)}}+\norminf{c^{(p+1)}}}{\epsilon} \norminf{g_{n-k}^{(k-p)}} \\
								&\leq & \norminf{g_{n-k}^{(k+1)} }+ \sum_{p=0}^{k} \binom{k}{p} \frac{\norminf{g_{p+1}}+C}{\epsilon} \norminf{g_{n-k}^{(k-p)}}
\end{eqnarray}
The first term $\norminf{g_{n-k}^{(k+1)} }$  is bounded with a polynomial of order $\frac{1}{\epsilon^{n+1}}$ by recurrence assumption. Regarding the terms in the sum, they also have all been bounded and 
$$\norminf{g_{p+1}}\norminf{g_{n-k}^{(k-p)}} \leq \left( \sum_{i=0}^{p} a_{i,p+1}\frac{1}{\epsilon^i} \right) \left(\sum_{i=0}^{n-p} a_{i,n-k,k-p}\frac{1}{\epsilon^i} \right) \leq \sum_{i=0}^{n} \tilde a_i \frac{1}{\epsilon^i}$$

So $\norminf{g_{n+1-k}^{(k)}}  \leq \sum_{i=0}^{n+1} a_{i,n+1-k,k}\frac{1}{\epsilon^i} $

To extend the result in $\RR^d$, the recurrence is made on the the following property
\eql{
\norminf{g_{\sigma,n-k}^{(j)}} \leq \sum_{i=0}^{n-k+\abs{j}-1} a_{i,n-k,j,\sigma}\frac{1}{\epsilon^i} \qquad \forall j \mid \abs{j} = 0,\dots,k \quad \forall k=0,\dots,n-2 \quad \forall \sigma  \in \{ 1,\dots,d \}^{\NN}
}
where $j$ is a multi-index since we are dealing with multi-variate functions, and $g_{\sigma,n-k}$ is defined at the end of the previous proof. The computations can be carried out in the same way as above, using the multivariate version of Leibniz formula in \eqref{Leibniz} since we are now dealing with multi-indexes.
\end{proof}

%%%

%\final{
%\appendix
%\input{sections/numerics-div}
%}

\end{document}